\documentclass[12pt]{amsart}
\usepackage{amssymb}
\usepackage{graphicx}
\usepackage{color}
\usepackage{latexsym}
\usepackage{amssymb,amsmath,amstext}
\usepackage{epsfig}
\usepackage{graphics}
\usepackage{amsthm}
\usepackage{verbatim}
\usepackage{placeins}
\usepackage[colorlinks=true, urlcolor=blue, linkcolor=blue, citecolor=blue]{hyperref}
\usepackage[margin=1in]{geometry}
\setcounter{MaxMatrixCols}{20}

\numberwithin{equation}{section}
% THEOREMS
\theoremstyle{plain}%
\newtheorem{theorem}{Theorem}
\numberwithin{theorem}{section}
\newtheorem{proposition}[theorem]{Proposition}
\newtheorem{example}[theorem]{Example}
\newtheorem{lemma}[theorem]{Lemma}
\newtheorem{corollary}[theorem]{Corollary}
\newtheorem{definition}[theorem]{Definition}

\allowdisplaybreaks

\begin{document}

\title{\bf Tropical hyperelliptic curves in the plane}

\author{Ralph Morrison
}
\address[Ralph Morrison]{Department of Mathematics and Statistics, Williams College, Williamstown, MA 01267}
\email{10rem@williams.edu}

\date{}

\maketitle
\begin{abstract}
Abstractly, tropical hyperelliptic curves are metric graphs that admit a two-to-one harmonic morphism to a tree.  They also appear as embedded tropical curves in the plane arising from triangulations of polygons with all interior lattice points collinear.  We prove that hyperelliptic graphs can only arise from such polygons.  Along the way we will prove certain graphs do not embed tropically in the plane due to entirely combinatorial obstructions, regardless of whether their metric is actually hyperelliptic.
% \PACS{PACS code1 \and PACS code2 \and more}
% \subclass{MSC code1 \and MSC code2 \and more}
\end{abstract}
\section{Introduction}
\label{intro}
Tropical curves can be defined in either an abstract or an embedded way.  Abstractly, they are connected, weighted metric graphs.  Often these are stratified by topological genus $g\geq 2$, and are parametrized by the stacky fan $\mathbb{M}_g$, the \emph{moduli space of tropical curves of genus $g$}.  A point in this space is a metric graph with integer weights on its vertices, such that the sum of the weights plus the first Betti number of the graph sum to $g$.  Like the classical moduli space of curves $\mathcal{M}_g$, this space has dimension $3g-3$, and a strong connection between these spaces is established in \cite{ACP}.  In particular, one associates to a curve the dual graph of the special fiber, weighting a vertex according to the genus of the corresponding irreducible component, and assigning edge lengths according to valuations associated to intersection points of components.  This graph sometimes referred to as the \emph{tropicalization} of the curve $C$, written $\textrm{trop}(C)$. This relationship to algebraic curves can be viewed as a primary motivation for studying tropical curves:  proving results for tropical curves leads to results for algebraic curves.  For instance, determining what is known as a metric graph's \emph{divisorial gonality} provides a lower bound on the gonality of a related algebraic curve \cite{baker}.

The space $\mathbb{M}_g$ can be constructed as follows.  For each (non-metric) trivalent graph $G$ with first Betti number $g$, we construct the space of all choices of lengths on the $3g-3$ edges of $G$ as $(\mathbb{R}_{\geq 0})^{3g-3}/\textrm{Aut}(G)$, where $\textrm{Aut}(G)$ is the automorphism group of $G$.  These spaces are then glued together according to a poset relating how different trivalent graphs can be transformed to the same graph by shrinking edge lengths to $0$.  Whenever a loop is collapsed to a vertex, that vertex is given an integer weight to record the decrease in Betti number.  For the majority of this paper, we will be concerned with metric graphs in the top strata of this space, i.e. those that are trivalent with no vertex weights.  We refer the reader to \cite{BMV,Chan,Chan_lectures} for more background on $\mathbb{M}_g$.

The perspective that views tropical curves as embedded objects considers a tropical curve as a one-dimensional rational weighted balanced polyhedral complex in $\mathbb{R}^n$, arising as the non-linear locus of a collection of polynomials over the min-plus algebra, as presented in \cite{MS}.  Such a tropical curve inherits a metric from the $\mathbb{Z}^n$ lattice, and contains a distinguished metric graph called the \emph{skeleton}, minimal among the subgraphs admitting a deformation retract of the whole tropical curve.

\begin{figure}
\centering
\includegraphics{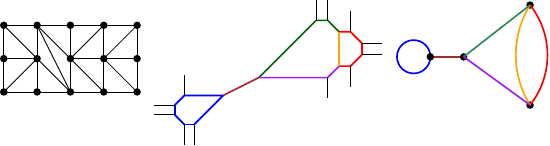}
\caption{A triangulation, a dual tropical curve, and its skeleton}
\label{figure:first_example}
 \end{figure}
 
 Embedded tropical curves arise from algebraic ones through another tropicalization process.  Let $K$ be an algebraically closed field, complete with respect to a nontrivial non-Archimedean valuation.   Consider an algebraic curve $C$ in the $n$-dimensional torus $(K^*)^n$.  Apply coordinate-wise valuation to the points of $C$, and take the Euclidean closure in $\mathbb{R}^n$ of the image of this map.  The resulting subset $\Gamma$ is a one-dimensional rational polyhedral complex, and can be given weights in a natural way to make it balanced. See \cite[\S 4]{MS} for more details on this tropicalization  process, and for the connection to min-plus polynomials.  There is a strong relationship between this process and the association of a metric graph to a point in $\mathcal{M}_g$:  if the tropical curve is trivalent and unweighted, then it ``faithfully'' represents the associated metric graph \cite[Corollary 5.28]{BPR}.  Finding embeddings of algebraic curves that give faithful tropicalizations is an important task.  One such approach uses a tool called \emph{tropical modification}, which is used in \cite{HMRT} to find such embeddings for all nonhyperelliptic curves of genus $3$.

The embedded tropical curves we focus on in this paper appear in the Euclidean plane.  A tropical plane curve $C$ is a one-dimensional weighted balanced polyhedral complex in $\mathbb{R}^2$, dual to a regular subdivision of the Newton polygon $P$ of the defining polynomial $f(x,y)$ of the curve.  Under this duality, polygons correspond to vertices, interior edges correspond to edges, boundary edges correspond to rays, and lattice points correspond to two-dimensional regions.  If this subdivision is a unimodular triangulation, we say that $C$ is \emph{smooth}.  (It is worth remarking that if a tropicalization of an algebraic curve is a smooth tropical plane curve, then that tropical curve is trivalent with no weights, and so it is a faithful tropicalization in the language of \cite{BPR}. This justifies the borrowing of the word ``skeleton'' from the terminology of Berkovich theory.)  We refer to the number of interior lattice points of $P$ as the \emph{genus} of $P$.  If $P$ has genus $g$, and $C$ is smooth, then then the skeleton of $C$ is a trivalent, unweighted metric graph of genus $g$.  In this paper we restrict our attention to such graphs. An example of such a tropical plane curve, together with the triangulation and the skeleton, appears in Figure \ref{figure:first_example}.

Let $g\geq 2$, and let $P$ be a convex lattice polygon with genus $g$.  We denote by $\mathbb{M}_P$ the subset of $\mathbb{M}_g$ consisting of all metric graphs that arise as the skeleton of a smooth tropical plane curve, up to closure.  We then define
the \emph{moduli space of tropical plane curves of genus $g$}, denoted $\mathbb{M}_g^{\textrm{planar}}$, as the union of all such $\mathbb{M}_P$.  Thus $\mathbb{M}_g^{\textrm{planar}}$ is the subset of $\mathbb{M}_g$ consisting of all metric graphs of genus $g$ that arise as the skeleton of a smooth tropical plane curve, up to closure  \cite{BJMS}.  For $g\geq 3$, $\mathbb{M}_g^{\textrm{planar}}$ is strictly contained in $\mathbb{M}_g$:  there are \emph{combinatorial} obstructions, meaning that certain types of graphs never arise as the skeleton of a smooth tropical plane curve, regardless of the metric; and there are \emph{metric} obstructions, meaning that some but not all choices of edge lengths on a graph $G$ arise from a smooth tropical plane curve.  For instance, \cite[Theorem 1.1]{BJMS} shows that for genus $g\geq 4$ and $g\neq 7$, $\textrm{dim}(\mathbb{M}_g^{\textrm{planar}})=2g+1$, which is strictly smaller than  $\textrm{dim}(\mathbb{M}_g)=3g-3$ for $g\geq 5$, indicating that not all metric graphs appear in $\mathbb{M}_g^{\textrm{planar}}$.

A complete characterization of which metric graphs appear in $\mathbb{M}_g^{\textrm{planar}}$ is in general an open problem.  It has been answered for $g\leq5$ in \cite{BJMS} by enumerating all regular unimodular triangulations of maximal polygons with genus at most $5$, and computing the cone of metrics arising from each such triangulation.  Even the combinatorial question of which types of graphs arise is  difficult:  in genus $5$, there are several non-achievable planar graphs that are not ruled out by any known nice criteria, such as those we will present in Lemmas \ref{prop:sprawling} and \ref{proposition:crowded}.

The moduli space $\mathbb{M}_g^{\textrm{planar}}$ is related to a number of other moduli spaces from algebraic geometry.  In \cite{CV}, the authors consider the moduli space of non-degenerate algebraic curves, which are those curves that admit a sufficiently nice embedding on a toric surface.  Their moduli space $\mathcal{M}_g^{\textrm{nd}}$ has the same dimension as $\mathbb{M}_g^{\textrm{planar}}$, and any metric graph in $\mathbb{M}_g^{\textrm{planar}}$ appears as the tropicalization of some curve in $\mathcal{M}_g^{\textrm{nd}}$, so $\mathbb{M}_g^{\textrm{planar}}\subseteq\textrm{trop}\left(\mathcal{M}_g^{\textrm{nd}}\right)$. However, it turns out that $\textrm{trop}\left(\mathcal{M}_g^{\textrm{nd}}\right)$ is not equal to $\mathbb{M}_g^{\textrm{planar}}$ for $g\geq3$; see \cite[\S 3]{BJMS} and Example \ref{example:nd}.  More generally, the notion of using tropical methods to study curves on toric surfaces is explored in \cite{NS,dhruv}.  A promising topic for future research would be to make more precise the relationship between  $\mathbb{M}_g^{\textrm{planar}}$ and moduli spaces of tropical stable maps to toric surfaces.

In this paper, we focus on \emph{hyperelliptic graphs}, the tropical analog of hyperelliptic curves \cite{Chan2}.  The usual definition relies on a theory of divisors on graphs, developed in \cite{BN} for combinatorial graphs and extended to metric graphs in \cite{GK,MZ}.  The \emph{divisorial gonality} of a graph is the minimum degree of a rank $1$ divisor on that graph. A metric graph is called \emph{hyperelliptic} if it has divisorial gonality $2$; that is, if it has a divisor of degree $2$ and rank $1$.  A nearly equivalent definition, which we will present more thoroughly in Section \ref{section:background}, characterizes hyperelliptic graphs as those metric graphs that admit a degree $2$ map to a tree. The locus of hyperelliptic graphs inside $\mathbb{M}_g$ is denoted $\mathbb{M}_{g,{\textrm{hyp}}}$.  We remark that this set is \emph{not} equivalent to the tropicalization of the classical moduli space of hyperelliptic curves; as shown in \cite{ABBR}, there exist metric graphs with degree $2$ rank $1$ divisors that do not arise as the tropicalization of any hyperelliptic curve. 

There is also a notion of \emph{hyperelliptic polygons}, as defined in \cite{Ca}.  Given a lattice polygon $P$ with $g\geq 2$ interior lattice points, we can consider $P_{\textrm{int}}$, the convex hull of all lattice points of $P$ not on the boundary of $P$:
$$P_{\textrm{int}}=\textrm{conv}((P\cap\mathbb{Z}^2)\setminus \partial P).$$
  The polygon $P$ is called \emph{hyperelliptic} if $P_\textrm{int}$ is a line segment, and \emph{nonhyperelliptic} if $P_{\textrm{int}}$ is a two-dimensional polygon.  Taking the union of $\mathbb{M}_P$ over all hyperelliptic polygons $P$ of genus $g$, we obtain the \emph{moduli space of hyperelliptic tropical plane curves of genus $g$}, denoted $ \mathbb{M}^{\textrm{planar}}_{g,{\textrm{hyp}}}$.  In other words, this is the locus inside of $\mathbb{M}_g^{\textrm{planar}}$ of all metric graphs arising from hyperelliptic polygons with $g$ interior lattice points.  For example, the metric graph illustrated in Figure \ref{figure:first_example} is one point in $ \mathbb{M}^{\textrm{planar}}_{3,{\textrm{hyp}}}$, since it is the skeleton of a smooth tropical plane curve with a hyperelliptic Newton polygon.

It is reasonable to ask about the relationship between $ \mathbb{M}^{\textrm{planar}}_{g,{\textrm{hyp}}}$ and  $\mathbb{M}_{g,{\textrm{hyp}}}$. The easier direction is that the first is contained in the second:  assuming $P_{\textrm{int}}$ is a horizontal line segment, a degree $2$ map from a tropical curve (dual to a subdivision of $P$) to a subdivided line segment is given by vertical projection and bridge-dilation.  This is the content of Lemma \ref{lemma:chains}.   Our main result is the following theorem, which shows that the relationship is as nice as can be hoped for.

\begin{theorem}
\label{thm:onlyhyperelliptic}
If a smooth tropical plane curve with Newton polygon $P$ has a hyperelliptic skeleton, then $P$ is a hyperelliptic polygon.
\end{theorem}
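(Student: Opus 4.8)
The plan is to prove the contrapositive: if $P$ is nonhyperelliptic, so that $P_{\text{int}}$ is a two-dimensional lattice polygon, then the skeleton $G$ of \emph{every} smooth tropical plane curve with Newton polygon $P$ fails to be hyperelliptic. The starting point is the standard dictionary between a unimodular triangulation $\mathcal{T}$ of $P$ and its dual curve $C$: the bounded regions of $C$, and hence a natural basis of the $g$ independent cycles of the trivalent skeleton $G$, are indexed by the interior lattice points of $P$, and two cycles $\gamma_v,\gamma_w$ share an edge of $G$ exactly when $v$ and $w$ are joined by an edge of $\mathcal{T}$. In this way the overlap pattern of the cycles of $G$ is controlled by the induced triangulation of $P_{\text{int}}$ together with its lattice points, and the problem becomes one about the combinatorics of that configuration.

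Next I would invoke Chan's characterization: a hyperelliptic metric graph of genus $\geq 2$ carries an involution $\iota$ whose quotient is a tree. For a trivalent graph this is a very rigid symmetry, and I would translate it into the language of the $\gamma_v$. Since $\iota$ acts on $H_1(G)$ and the quotient has no cycles, each $\gamma_v$ must be preserved and reflected by $\iota$, and the cycles must be threaded along the quotient tree in an essentially linear, ladder-like fashion. I would package this as a structural lemma asserting that the cycle-adjacency graph of a hyperelliptic trivalent skeleton is a path, with no branching and no three cycles meeting in a $K_4$-type cluster; the branching or clustering patterns are precisely what a tree quotient cannot accommodate. Combined with the dictionary above, a path-like cycle-adjacency pattern says that the interior lattice points of $P$ sit in a chain, which for a unimodular configuration forces them to be collinear, i.e. $P_{\text{int}}$ is a segment.

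The heart of the argument, and the step I expect to be the main obstacle, is showing that two-dimensionality of $P_{\text{int}}$ always destroys this path structure, and that it does so for \emph{every} unimodular triangulation rather than merely some convenient one. The difficulty is that the incidences among the $\gamma_v$ depend on $\mathcal{T}$, and one can retriangulate to delete any single interior edge, so no individual overlap is robust. To obtain a triangulation-independent obstruction I would extract from the two-dimensionality of $P_{\text{int}}$ a feature that survives retriangulation—such as a triple of interior lattice points spanning a unimodular triangle, or a lattice point lying in the interior of $P_{\text{int}}$—and argue that in any unimodular triangulation this configuration forces either a vertex of $G$ whose removal disconnects it into three pieces or a cluster of three mutually overlapping cycles. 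This is exactly where the new combinatorial nonembedding obstructions advertised in the abstract should enter, applied not to rule out $G$ as a skeleton but to certify that the realized skeleton contains a substructure incompatible with any tree-quotient involution.

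Assembling these pieces yields the contrapositive: a nonhyperelliptic $P$ produces a nonhyperelliptic skeleton for every triangulation, so a hyperelliptic skeleton can arise only from a hyperelliptic $P$. As an alternative route worth pursuing, I would try to reconstruct a projection direction directly from the involution $\iota$ on $G$ and match it to the lattice geometry of $P$, recovering the vertical-projection-and-bridge-dilation picture of the easy direction and forcing $P_{\text{int}}$ to be a segment parallel to the fibers; here the obstacle is proving that the abstract involution is genuinely induced by such a projection rather than merely an abstract graph symmetry.
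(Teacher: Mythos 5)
Your first half tracks the paper reasonably well: the dictionary between interior lattice points and cycles, Chan's classification of hyperelliptic trivalent graphs as ladders, and the use of planarity-type nonembedding obstructions (the paper's sprawling and crowded criteria) to force the realized skeleton to be a chain is exactly Sections 3--4 of the paper. (One caveat: your ``structural lemma'' that the cycle-adjacency graph of a hyperelliptic trivalent skeleton is a path is false for abstract hyperelliptic graphs --- ladders over branching trees are hyperelliptic and trivalent --- and only becomes true after the crowdedness obstruction is applied to the planar embedding, which is how the paper argues; you do gesture at this, so I count it as imprecision rather than error.)

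The genuine gap is in the step you yourself flag as the heart of the argument, and it fails for a concrete reason: two-dimensionality of $P_{\textrm{int}}$ does \emph{not} destroy the path structure of the cycle adjacencies, and your claim that ``a path-like cycle-adjacency pattern\ldots forces them to be collinear'' is simply wrong. A nonhyperelliptic polygon can produce a skeleton that is combinatorially a chain: if $p_i=(1,2)$, $p_{i+1}=(1,1)$, $p_{i+2}=(2,1)$ are non-collinear interior points, a triangulation containing the segment from $p_{i+1}$ to $q=(2,2)$ separates $p_i$ from $p_{i+2}$, so no edge of the triangulation joins them and the cycle-adjacency graph remains a path. Hence there is no triangulation-independent \emph{combinatorial} obstruction of the kind you are hunting for --- no sprawling vertex, no cluster of three mutually overlapping cycles --- and the contrapositive cannot be closed at the combinatorial level. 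What is missing is the entire metric half of the paper (Proposition 5.1): a graph with the combinatorial type of a chain is hyperelliptic if and only if the two edges of each $2$-edge-cut have equal lengths (\cite[Lemma 4.2]{BLMPR}), and the paper shows by a lattice-length computation --- the edges dual to the segments through $p_{i+1}$ hitting the coordinate axes have lattice length equal to their horizontal width, forcing $\ell_2\geq \ell_1+\ell_h>\ell_1$ on the cycle $c_{i+1}$ --- that any non-collinear triple of interior points produces a $2$-cut with unequal edge lengths, so the chain's metric is never hyperelliptic. Your proposal never engages with edge lengths at all, so it cannot rule out these chain-type skeletons; your closing ``alternative route'' via reconstructing a projection from the involution is the right instinct but is left entirely speculative and would in any case have to confront the same metric comparison.
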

In the language of moduli spaces, if $\mathbb{M}_P$ contains a hyperelliptic graph that did not come from taking closures, we have that $P$ is hyperelliptic.  This means that, \textbf{before taking closures},
$$\mathbb{M}^{\textrm{planar}}_{g,{\textrm{hyp}}}=\mathbb{M}_{g,\textrm{hyp}}\cap \mathbb{M}_g^{\textrm{planar}}. $$
This generalizes  \cite[Theorem 4.3]{BLMPR}, which proved the result for $g=3$. 

We remark that the ``before taking closures'' assumption is absolutely vital:  if we allow edge lengths to go to $0$, which in the limit they can, we can find hyperelliptic graphs appearing in $\mathbb{M}_g^{\textrm{planar}}$ that do not arise from any hyperelliptic polygon, as shown in the following example.

\begin{example}  Figure \ref{figure:poset_portion} illustrates a small portion of the poset of all combinatorial types of graphs of genus $4$, which encodes the data for how the maximal cells of $\mathbb{M}_4$ are glued to one another.  The top two graphs are trivalent, and so correspond to two of the top-dimensional cells of $\mathbb{M}_4$.  Since both can be transformed into the bottom graph by shrinking certain edge lengths, those cells are in part glued according to that relationship; the bottom graph has a marked vertex to record to collapse of a loop.  It was shown in \cite[\S 7]{BJMS} that the upper left graph is the skeleton of the tropically planar graph as long as the inner triangle has all three edge lengths equal.  Thus all such metric graphs appear in $\mathbb{M}_4^{\textrm{planar}}$.  Since we define $\mathbb{M}_4^{\textrm{planar}}$ by taking a closure within $\mathbb{M}_4$ of all achievable skeleta, it follows that the bottom graph also appears in  $\mathbb{M}_4^{\textrm{planar}}$, in fact with any positive edge lengths.  However, these metric graphs also appear in $\mathbb{M}_{4,\textrm{hyp}}$.  To see this, note that as long as the two parallel edges have equal length, the upper right graph is hyperelliptic; and by definition, $\mathbb{M}_{4,\textrm{hyp}}$ is closed under letting edge lengths go to $0$.  Thus $\mathbb{M}_{g,\textrm{hyp}}\cap \mathbb{M}_g^{\textrm{planar}}$ contains all metric graphs with combinatorial type given by the bottom graph.  No such graph can arise from a hyperelliptic polygon:  as we will show in Lemma 2.2, a hyperelliptic polygon can only give rise to skeleta with underlying graphs that are \emph{chains}, and the illustrated graph cannot be obtained from shrinking edge lengths in a chain to $0$.  Thus we have  
$\mathbb{M}^{\textrm{planar}}_{4,{\textrm{hyp}}}\subsetneq\mathbb{M}_{4,\textrm{hyp}}\cap \mathbb{M}_4^{\textrm{planar}}. $

\begin{figure}
\centering
\includegraphics[scale=0.6]{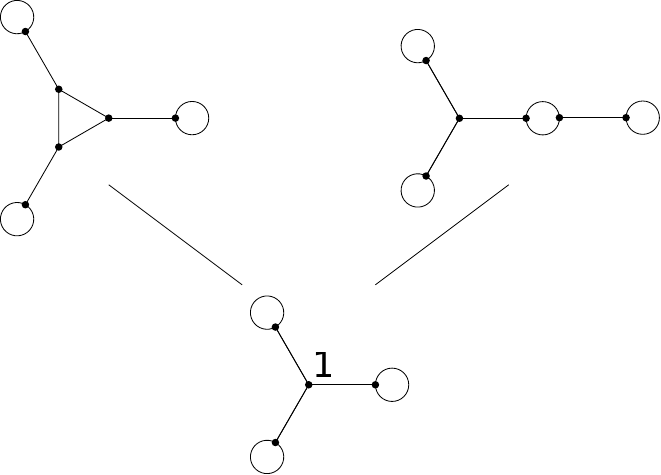}
\caption{Two graphs of genus $4$ with a common minor}
\label{figure:poset_portion}
 \end{figure}

\end{example}

An immediate corollary of Theorem \ref{thm:onlyhyperelliptic} is a lower bound on the divisorial gonality of metric graphs arising from smooth tropical plane curves with nonhyperelliptic Newton polygon.  Let $\textrm{dgon}(G)$ denote the divisorial gonality of a metric graph $G$.

\begin{corollary}  Let $P$ be a nonhyperelliptic polygon, and let $G$ be the skeleton of a smooth tropical plane curve with Newton polygon $P$.  Then $\textrm{dgon}(G)\geq 3$.
\end{corollary}

Combined with \cite{baker}, this lower bound on graph gonality also serves as a lower bound on the algebraic gonality of any algebraic curve tropicalizing to such a tropical curve.  

\begin{proof}  Since $P$ is nonhyperelliptic, we know that $G$ has positive genus, so $G$ is not a tree.  A quick corollary of the tropical Riemann-Roch Theorem \cite{BN,GK,MZ} is that a graph has divisorial gonality $1$ if and only if it is a tree, and so $\textrm{dgon}(G)\neq 1$.  By Theorem \ref{thm:onlyhyperelliptic}, since $P$ is nonhyperelliptic we have that $G$ is not a hyperelliptic graph, so $\textrm{dgon}(G)\neq 2$.  We therefore have $\textrm{dgon}(G)\geq 3$.
\end{proof}
Understanding the relationship between a Newton polygon and the divisorial gonality of dual graphs of its unimodular triangulations is in general a difficult problem \cite[Conjecture 3 + Err.]{CC}.  This corollary provides a lower bound for nonhyperelliptic polygons; other work provides upper bounds using \emph{lattice width} \cite{CC}.

Our strategy for proving Theorem \ref{thm:onlyhyperelliptic} is as follows.   In Section \ref{section:sprawling_and_crowded}, we present two types of graphs, \emph{sprawling} and \emph{crowded}, which are never the skeletons of smooth tropical plane curves.  In Section \ref{section:combinatorial} we  use these criteria to show that if a hyperelliptic graph  is a smooth tropical plane curve's skeleton, then it must be a simple type of graph called a \emph{chain}.   It then suffices to show in Section \ref{section:metric} that if a polygon gives rise to a  chain that is also a hyperelliptic graph, then that polygon must be a hyperelliptic polygon.

\section{Background and definitions}
\label{section:background}

We begin by giving background on the theory of abstract metric graphs, and move from there to embedded tropical curves.

Throughout this paper, all graphs are connected, with loops and multiple edges allowed unless explicitly stated otherwise. We will also focus on graphs with all valencies at most $3$. This means that a triple edge could only occur if a graph consists of two vertices joined by three edges.  Outside of this one graph, all instances of multiple edges will be \emph{bi-edges}, where two vertices are joined by a pair of edges.  If removing an edge of a graph disconnects it, we call that edge a \emph{bridge}.  If a graph has no bridges, we say it is \emph{$2$-edge-connected}.  If we delete all bridges from a graph, the resulting collection of connected graphs are called the \emph{$2$-edge-connected components} of the graph.

For a graph $G$ let $V(G)$ and  $E(G)$ denote its vertices and edges, respectively.  A \emph{metric graph} $(G,\ell)$ is a graph $G$ together with a function $\ell:\,E(G)\rightarrow\mathbb{R}_{>0}$, which assigns positive real lengths to the edges of $G$.  When the metric is clear from context, we may write $(G,\ell)$ simply as $G$.
  
  Given two metric graphs $(G,\ell)$ and $(G',\ell')$ with no loops, we will define a morphism of graphs between them.  We can extend this definition to morphisms of metric graphs with loops allowed by identifying two metric graphs when they define are identical as topological spaces, meaning we may introduce valence $2$ vertices on the interior of any edge. A \emph{morphism} $\phi$ from $(G,\ell)$ to $(G',\ell')$ is a map of sets
  $$\phi:V(G)\cup E(G)\rightarrow V(G')\cup E(G')$$ where $\phi(V(G))\subset V(G')$, and where the following properties hold for all $e\in E(G)$ with endpoints $x$ and $y$:
  \begin{itemize}
\item[(i)] if $\phi(e)\in V(G')$ then $\phi(x)=\phi(e)=\phi(y)$,
\item[(ii)] if $\phi(e)\in E(G')$ then $\phi(e)$ is an edge with endpoints $\phi(x)$ and $\phi(y)$, and
\item[(iii)] if $\phi(e)=e'$ then $\ell'(e')/\ell(e)$ is an integer.
  \end{itemize}
Given $e$ and $e'$ as in property (iii), we let $\mu_\phi(e)=\ell'(e')/\ell(e)$.
We say a morphism $\phi:G\rightarrow G'$ is \emph{harmonic} if for all $x\in V(G)$, we have that
$$m_\phi(x)=\sum_{\substack{e\in E(G), \\x\in e,\phi(e)=e'}}\mu_\phi(e)$$
is the same for all choices of $e'\in E(G')$ that are incident to $\phi(x)$.  Note that $m_\phi(x)\geq 0$; we say that $\phi$ is \emph{nondegenerate} if $m_\phi(x)>0$ for all $x\in V(G)$.  Finally, the \emph{degree} of a morphism $\phi$ is
$$\text{deg}(\phi)=\sum_{\substack{e\in E(G),\\ \phi(e)=e'}}\mu_\phi(e)$$
for any $e'\in E(G)$. In turns out that this integer is independent of the choice of $e'$ \cite[Lemma 2.4]{BN}

\begin{figure}[hbt]
\centering
\includegraphics[scale=0.8]{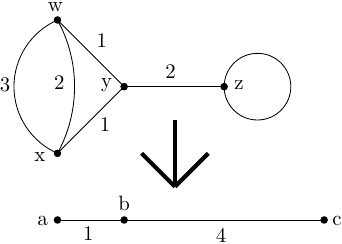}
\caption{A non-degenerate harmonic morphism of degree $2$}
\label{figure:morphism}
\end{figure}

As an example, consider the morphism $\phi$ illustrated in Figure \ref{figure:morphism}, which sends a graph of genus $3$ to a \emph{metric tree} (that is, a metric graph of genus $0$).  On vertices, we have $\phi(w)=\phi(x)=a$, $\phi(y)=b$, and $\phi(z)=c$.  The loop is collapsed to the point $c$, and the edges between $w$ and $x$ of lengths $3$ and $2$ are sent to the point $a$.  The edges connecting $y$ to $w$ and to $x$ are sent to the edge of length $1$, and the bridge of length $2$ is sent to the edge of length $4$.  This morphism is nondegenerate and harmonic, and has degree $2$.

We now define what it means for a graph to be hyperelliptic, following the conventions of \cite{Chan}.  Suppose that $G$ is a metric graph with no points of valence $1$ such that either: there exists a nondegenerate, harmonic morphism $\phi:G\rightarrow T$, where $T$ is a metric tree and $\deg(\phi)=2$; or $|V(G)|=2$.  Then we say that $G$ is a \emph{hyperelliptic graph}.  For instance, the genus $3$ metric graph in Figure \ref{figure:morphism} is hyperelliptic.  An alternate definition of hyperelliptic graphs builds up a theory of divisors on graphs, and calls a graph hyperelliptic if it has a rank $1$ divisor of degree $2$.  Yet another definition declares a metric graph $G$ is hyperelliptic if it has an involution $i$ such that $G/i$ is a tree.  The equivalence of these definitions is the content of \cite[Theorem 1.3]{Chan2}.

We remark that this is not the only notion of hyperelliptic graphs considered in the literature.  For instance, \cite{ABBR} considers those graphs that arise as tropicalizations of hyperelliptic curves; we call such a graph \emph{realizable}.  Indeed, they offer a complete classification of all realizable hyperelliptic graphs \cite[Corollary 4.5]{ABBR}.  This classification shows that the graph appearing in Figure \ref{hyp_step1} does not arise from a tropicalization, due to the three bridges meeting at a single vertex.  Whether or not we exclude such graphs will end up not effecting our end results, since all smooth tropical plane curves are realizable:  any tropical plane curve is the tropicalization of an algebraic curve, and if smooth faithfully represents the associated metric graph \cite[Corollary 5.28]{BPR}

\begin{figure}[hbt]
\centering
\includegraphics[scale=1]{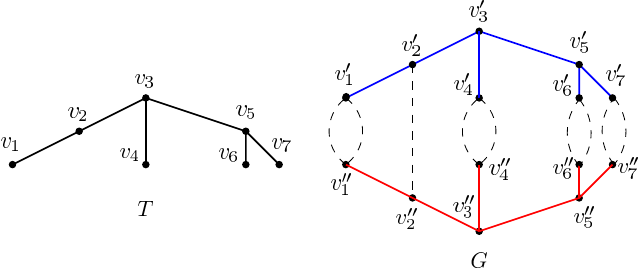}
\caption{A tree $T$ with $7$ vertices, and the corresponding ladder $G$ of genus 8, with the two copies of $T$ highlighted}
\label{figure:ladder}
\end{figure}

A key result we will use is the classification of all $2$-edge-connected hyperelliptic graphs from~\cite{Chan2}. Let $T$ be a metric tree with $g-1$ vertices, each of which has valence at most $3$.  Duplicate the tree, and connect the two copies by adding edges between corresponding vertices until the graph $G$ is trivalent, as illustrated in Figure \ref{figure:ladder}.  The resulting graph is called a \emph{ladder}.  There is a natural degree $2$ harmonic morphism $\phi$ from $G$ to $T$, defined as follows.  For every vertex $v\in T$ and the two corresponding vertices $v',v''\in G$, let $\phi(v')=\phi(v'')=v$.  Thus for each edge $vw\in T$, we must define $\phi(v'w')=\phi(v''w'')=vw$.  Finally, for every edge in $G$ of the form $v'v''$, let $\phi(v'v'')=v$.  For the graph $G$ in Figure \ref{figure:ladder}, this means $\phi(v_i')=\phi(v_i'')=v_i$ for all $i$, that the solid edges are sent to edges of $T$, and that the dotted edges are sent to vertices.

\begin{proposition}[Theorem 4.9 in \cite{Chan2}]  The $2$-edge-connected trivalent hyperelliptic graphs of genus $g$ are precisely the ladders of genus $g$.
\label{ladder_proposition}
\end{proposition}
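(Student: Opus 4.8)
The plan is to prove both inclusions, with the reverse inclusion carrying essentially all of the content.

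For the forward inclusion I would check that each ladder has the three stated properties. Trivalence is immediate from the construction, since rungs are added precisely until every node has valence $3$, and hyperellipticity follows from the copy-swapping involution $i$, whose quotient is $T$ with the folded rungs attached and is therefore a tree. The only point needing an argument is $2$-edge-connectivity, which I would obtain by showing that $G$ is bridgeless: deleting a rung leaves the two copies of $T$ joined by another rung, while deleting an edge of a copy splits that copy into two subtrees, each of which---by a short valence count---must still contain a node of valence less than $3$ and hence a rung to the other copy.

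For the reverse inclusion, let $G$ be $2$-edge-connected, trivalent, and hyperelliptic of genus $g$, and fix an involution $i$ with $G/i$ a tree $T$. First I would pin down the action of $i$ on homology: the transfer isomorphism gives $H_1(G;\Q)^i \cong H_1(T;\Q)=0$, so $i$ acts as $-\mathrm{id}$ on $H_1(G;\Q)$. Next I would locate the fixed set $\Phi$ of $i$. The equivariant Euler characteristic identity $\chi(G)=2\chi(T)-\chi(\Phi)$ yields $\chi(\Phi)=g+1$; and since any cycle contained in $\Phi$ would furnish a nonzero class in $H_1(G;\Q)^i$, contradicting $i=-\mathrm{id}$, the set $\Phi$ must be a forest, hence a disjoint union of exactly $g+1$ trees.

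The crux is to show that each component of $\Phi$ is a single point, namely the midpoint of an edge whose two endpoints are swapped by $i$. Here trivalence controls the local picture: at any fixed vertex the three incident half-edges are permuted by a map of order two, so either all three or exactly one are fixed, and I would argue that a pointwise-fixed edge (equivalently, a dilated edge of the morphism) forces a bridge, contradicting $2$-edge-connectivity. This reduces $\Phi$ to $g+1$ isolated fold midpoints, that is, $g+1$ folded edges. Cutting $G$ at these midpoints then removes every fixed point, so $i$ acts freely on what remains and must interchange its two pieces; since $T$ is a tree, each piece is a tree $T'$ on $g-1$ nodes, and the folded edges reappear as the rungs joining corresponding nodes of the two copies. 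Finally, trivalence of $G$ bounds the valence of each node of $T'$ by $3$, exhibiting $G$ as the ladder built from $T'$.

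I expect the main obstacle to be this last structural step: ruling out pointwise-fixed (dilated) edges and branch vertices of $\Phi$, and verifying that cutting the folds genuinely disconnects $G$ into two tree-halves swapped by $i$. This is exactly where $2$-edge-connectivity is indispensable, being the hypothesis that separates ladders from the broader class of trivalent hyperelliptic graphs with bridges, so the heart of the argument is a careful local-to-global analysis showing that bridgelessness forbids every fixed configuration other than a simple fold.
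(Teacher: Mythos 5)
This proposition is imported wholesale from \cite{Chan2} (Theorem 4.9): the paper gives no proof of its own, so there is no in-paper argument to compare against. Judged on its own terms, your outline is correct, and every step you assert is provable. Your route is a self-contained equivariant-topology argument---transfer isomorphism to get $i_*=-\mathrm{id}$ on $H_1(G;\Q)$, the Euler characteristic identity $\chi(G)=2\chi(T)-\chi(\Phi)$ to count $g+1$ fixed components, then a local analysis at fixed points---whereas Chan's treatment develops hyperellipticity through divisor theory and harmonic morphisms of (metric) graphs, with the fixed-point and dilation behavior extracted from that machinery (the same lemmas this paper later invokes for where bridges may attach). Your approach buys brevity and elementary prerequisites; Chan's buys extra structure used elsewhere, such as uniqueness of the hyperelliptic involution and the classification of hyperelliptic graphs \emph{with} bridges.

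The crux you flag---that a pointwise-fixed edge forces a bridge---does go through, and more cheaply than you anticipate. If $e$ is fixed pointwise and is not a bridge, it lies on a cycle $C$; since a graph has no $2$-cells, homologous $1$-cycles are \emph{equal as chains}, so $i_*[C]=-[C]$ forces $i(C)=-C$ at the chain level, which is absurd because $e$ occurs in $C$ and in $i(C)$ with the same sign. This also kills fixed vertices: an order-two permutation of the three half-edges at a fixed trivalent vertex fixes one or all three of them, a fixed half-edge propagates to a pointwise-fixed edge (loops are already excluded, as the third edge at a looped trivalent vertex is a bridge), contradiction. Two smaller points you should make explicit in the endgame: first, after deleting the $g+1$ open inverted edges the remainder $H$ satisfies $\chi(H)=(1-g)+(g+1)=2$, and each component of $H$ is a tree because it trivially double-covers a component of the quotient forest, so $H$ has \emph{exactly} two components; second, $i$ cannot preserve a component, since every automorphism of a finite tree fixes a vertex or the midpoint of an edge, contradicting freeness---so the two components are swapped, each has $(2g-2)/2=g-1$ nodes of valence at most $3$, and $G$ is the ladder over either one.
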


\begin{figure}[hbt]
\centering
\includegraphics[scale=0.6]{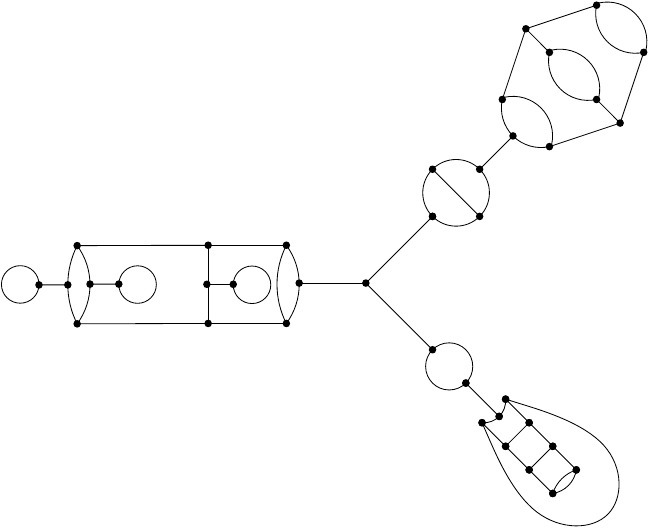}
\caption{A hyperelliptic graph that is not $2$-edge-connected}
\label{hyp_step1}
\end{figure}

This result allows us to construct \emph{all} trivalent hyperelliptic graphs. Deleting the bridges  of a hyperelliptic graph and smoothing over the $2$-valent vertices yields the $2$-edge-connected components, which must be ladders, genus $1$ loops, or simply points. Then, the bridges can only be attached at fixed points of the involution $i$, as discussed in Lemma 3.10 and Corollary 3.12 of \cite{Chan2}.  For a ladder constructed from a tree $T$, these fixed points are precisely the midpoints of the edges inserted between the two copies of $T$.  In the case that  a $2$-edge-connected component is a genus $1$ loop, there can be only two fixed points, splitting the loop into two  line segments of the same length.  An example of a hyperelliptic graph we can construct in this way is illustrated in Figure~\ref{hyp_step1}.

One special class of hyperelliptic graphs is the collection of \emph{chains}, which can be constructed as follows for genus $g\geq 2$.   Start with a line segment with $g-1$ vertices, where the $g-2$ edges have 
arbitrary positive lengths. Duplicate each edge so that
the resulting parallel edges have the same length, and attach two loops of arbitrary lengths
at the endpoints. (In the case of $g=2$, attach two loops to the single vertex.)
At this point, the graph has genus $g$ and contains $g-1$ vertices, all of which are $4$-valent.
There are two possible ways to split each vertex into two vertices connected
by an edge of arbitrary length, resulting in a trivalent graph called a \emph{chain of genus $g$}.  Ignoring lengths,  there are $2^{g-1}$ possible ways to perform this procedure; however, some give isomorphic graphs.  In particular, the number of combinatorial types of chains is equal to the number of binary strings of length $g-1$, with strings and their reverses identified.  Counting up such strings, we find that there are $2^{g-2}+2^{\lfloor(g-2)/2\rfloor}$ combinatorial types of chains of genus $g$.  The six combinatorial types of chains of genus $4$ are illustrated in Figure \ref{figure:g4chains}.  For each genus $g\geq 2$ there is a  unique $2$-edge connected chain of genus $g$, of them form illustrated in Figure \ref{figure:standardchain}.

\begin{figure}
\centering
\includegraphics[scale=0.7]{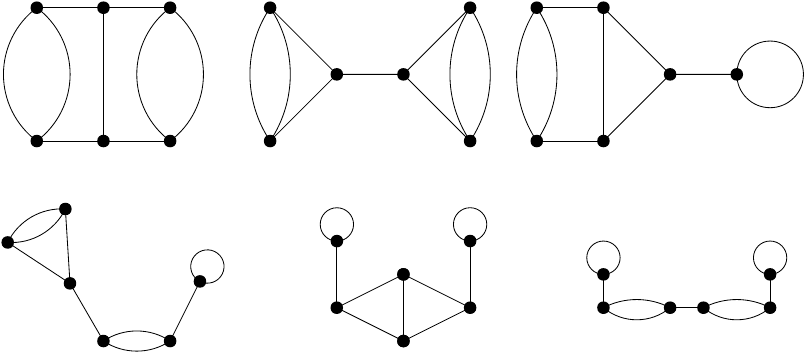}
\caption{The six chains of genus $4$}
\label{figure:g4chains}
 \end{figure}

We write $\mathbb{M}^{\textrm{chain}}_g$ for the
subset of $\mathbb{M}_g$ consisting of all chains. By construction, there are $2g-1$ degrees
of freedom for the edge lengths in a chain, so we have $\text{dim}(\mathbb{M}^{\textrm{chain}}_g)=2g-1$.
 In fact, we have $\mathbb{M}^{\textrm{chain}}_g\subset\mathbb{M}_{g,\textrm{hyp}}$: each chain has an involution sending the parallel edges to one another, and modding out by this involution yields a line segment.  Moreover, by \cite[Lemma 4.2]{BLMPR},  a graph with the same combinatorial type as a chain is hyperelliptic if and only if the two edges in each parallel pair have the same lengths.

We now turn to the background on $\mathbb{M}_g^{\textrm{planar}}$, the moduli space of tropical plane curves of genus $g$ introduced in \cite{BJMS}.  It is natural to decompose $\mathbb{M}_g^{\textrm{planar}}$ into smaller polyhedral spaces.  Let  $P$ be a lattice polygon with $g$ interior lattice points, and let $\Delta$ be a regular unimodular triangulation of $P$.  We will write $\mathbb{M}_\Delta$ for the cone of all metric graphs arising from $\Delta$, and $\mathbb{M}_P$ for all metric graphs arising from $P$.  Then we may write
$$\mathbb{M}_P=\bigcup_{\Delta}\mathbb{M}_\Delta,$$
and
$$\mathbb{M}_g^{\textrm{planar}}=\bigcup_{P}\mathbb{M}_P,$$
where the first union is taken over all regular unimodular triangulations $\Delta$ of $P$, and the second union is taken over all lattice polygons $P$ with $g$ interior lattice points.
It is worth noting that there are only finitely many lattice polygons $P$ with $g$ interior lattice points up to isomorphism \cite[Proposition 2.3]{BJMS}, so all unions can be taken to be finite.  Moreover, for computing $\mathbb{M}_g^{\textrm{planar}}$, it suffices to take the union over polygons $P$ that are \emph{maximal}, meaning that $P$ is not contained in any larger lattice polygon with the same configuration of interior lattice points.

As discussed in \cite[\S 1]{BJMS}, the space  $\mathbb{M}_g^{\textrm{planar}}$ has the structure of a stacky fan.  Their argument rests on considering a space $\mathbb{M}_{P,G}$, which is all metric graphs arising from $P$ with combinatorial type $G$, thought of as a subset of $\mathbb{R}_{\geq 0}^{3g-3}$.  This has the structure of a polyhedral fan; by choosing appropriate subdivisions, one can choose a fan structure that is invariant under the symmetries of $G$, allowing us to consider $\mathbb{M}_{P,G}$ as a stacky fan inside of $\mathbb{M}_g$.  Gluing together different $\mathbb{M}_{P,G}$'s yields $\mathbb{M}_g^{\textrm{planar}}$.  It is worth remarking that it is not clear if the stacky fan structure on $\mathbb{M}_g^{\textrm{planar}}$ is in any way canonical or unique, as there are many possible choices to make in the construction of it, e.g. in whether to include possible redundant polygons $P$. 

\begin{figure}
\centering
\includegraphics[scale=0.8]{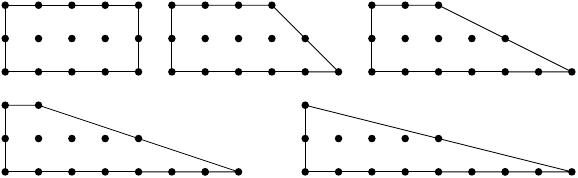}
\caption{The five maximal hyperelliptic polygons of genus $3$}
\label{figure:g3hyperelliptic_polygons}
\end{figure}

We define
$$ \mathbb{M}^{\textrm{planar}}_{g,{\textrm{hyp}}}  \,\, \, := \,\,\,
\bigcup_{P\textrm{ hyp}} \mathbb{M}_P, $$
where the union is over all hyperelliptic polygons $P$ of genus $g$.  Although by \cite{Ca} there are {$\frac{1}{6}(g+3)(2g^2+15g+16)$} hyperelliptic polygons of genus $g$, we may restrict our union to the maximal ones, of which there are $g+2$.  These are illustrated for $g=3$ in Figure \ref{figure:g3hyperelliptic_polygons}.  In fact, by \cite[Theorem 6.1]{BJMS}, the space $ \mathbb{M}^{\textrm{planar}}_{g,{\textrm{hyp}}}$ is equal to $ \mathbb{M}_{T^g_{\textrm{hyp}}}$, where $T^g_{\textrm{hyp}}$ is the maximal hyperelliptic triangle of genus $g$, illustrated for $g=3$ in Figure \ref{figure:hyperelliptic_triangle_example}.

\begin{figure}[hbt]
\centering
\includegraphics[scale=.8]{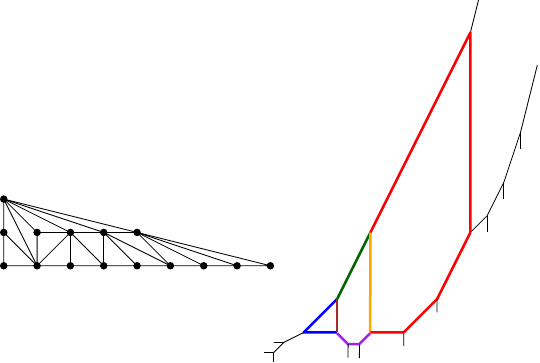}
\caption{The genus 3 hyperelliptic triangle $T^3_{\textrm{hyp}}$, with an example of a triangulation and a dual tropical curve}
\label{figure:hyperelliptic_triangle_example}
 \end{figure}

\begin{lemma}\label{lemma:chains} Let $P$ be a hyperelliptic polygon with genus $g$ at least $2$.  Then the skeleton of any smooth tropical curve with Newton polygon $P$ is a chain.  
\end{lemma}

\begin{proof}  It suffices to prove our claim where $P$ is one of the $g+2$ maximal hyperelliptic polygons, chosen to have $y$ coordinate between $0$ and $2$ and $x$ coordinate greater than or equal to $0$.  Suppose that in a triangulation of $P$, a line segment connects an interior point to the upper or lower boundary of $P$. Then by the structure of $P$, this line segment is either vertical or has a slope of the form $\pm 1/k$ for some integer $k$. It follows the dual edge of the tropical curve will have integer slope. Thus, the lattice length of this dual edge is equal to its horizontal width measured by the change in its $x$-coordinate.

Now, consider any regular unimodular triangulation of $P$. Because $P_\textrm{int}$ is a line segment, the triangulation will either connect two adjacent interior lattice points or separate them with an edge from the top boundary to the bottom boundary.  In a tropical curve dual to this triangulation, this results in a sequence of $g$ loops (one for each interior point), with successive loops either sharing edges (if the interior points were connected) or joined by a bridge (if the interior points were separated).  This means that the skeleton of the tropical curve will have the same combinatorial type as a chain.

To see that the skeleton must in fact be a chain, we need to show that parallel edges in the skeleton have the same length.  These parallel edges are made up of sequences of edges in the tropical curve, which start and stop at points with the same $x$-coordinates.  As discussed above, it follows that the sum of the lattice lengths of the strings of edges must be the same.  Thus the skeleton is in fact a chain.
\end{proof}

In the language of moduli spaces, this means that
$$\mathbb{M}^{\textrm{planar}}_{g,{\textrm{hyp}}} \subset \mathbb{M}^{\textrm{chain}}_g. $$

\section{Sprawling graphs and crowded graphs}
\label{section:sprawling_and_crowded}

In this section we describe two combinatorial obstructions to a graph being the skeleton of a smooth tropical plane curve.  In both cases, the obstructions are features of a graph that could not possibly arise in the dual graph to a triangulation of a lattice polygon.

\begin{definition}  A connected trivalent  graph $G$ is called \emph{sprawling}
 if there exists a vertex $s$ of $G$ such that $G \backslash \{s\}$ consists of three distinct components.
\end{definition}

  Note that each component of  $G\backslash \{s\}$ must have genus at least one: otherwise $G$ would not have been leafless, and hence not trivalent.   It is relatively simple and efficient to check whether a trivalent graph is sprawling:  the set of all bridges can be found in linear time \cite{Tar}, and then the graph is sprawling if and only if three of the bridges meet at a common vertex.   The sprawling graphs of genus at most $4$ are illustrated in Figure \ref{figure:sprawling_graphs}.

\begin{figure}[hbt]
\centering{
\includegraphics{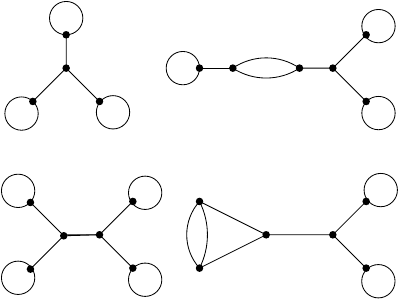}
}
    \caption{The four sprawling graphs of genus at most $4$}
  \label{figure:sprawling_graphs}
\end{figure}

\begin{lemma}  \label{prop:sprawling}
Sprawling graphs are never the skeletons of smooth tropical plane curves.
\end{lemma}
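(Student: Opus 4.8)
The plan is to argue by contradiction, using the duality between the curve $C$ and its triangulation $\Delta$ of the Newton polygon $P$, and to reduce the statement to a convex-geometric impossibility about $P$. Suppose some smooth tropical plane curve $C$, dual to a unimodular triangulation $\Delta$ of $P$, had a sprawling skeleton $G$ with sprawling vertex $s$, so that $G\setminus\{s\}$ has three components $G_1,G_2,G_3$. Since $C$ is smooth, $s$ is dual to a unimodular triangle $\sigma$ of $\Delta$, and the three edges of $G$ at $s$ are dual to the three edges of $\sigma$. Because removing $s$ disconnects $G$ into three pieces, each of these edges is a bridge of $G$; in particular none of them lies on a cycle, so no cycle of $G$ passes through $s$.

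First I would record the consequences for $\sigma$. The cycles of $G$ are in bijection with the interior lattice points of $P$, and the cycle surrounding an interior point $p$ passes through exactly the vertices of $G$ dual to the triangles of $\Delta$ incident to $p$. Since $s$ lies on no cycle, $\sigma$ is incident to no interior lattice point, and hence all three vertices $v_1,v_2,v_3$ of $\sigma$ lie on $\partial P$. Being the edges of a unimodular triangle whose vertices are on the boundary, the three edges of $\sigma$ are chords of $P$ that cut $P\setminus\sigma$ into three regions $P_1,P_2,P_3$, one beyond each edge. Next I would match these regions to the three components: each edge of $\sigma$ is, dually, the unique edge of $\Delta$ crossing the corresponding chord, so its dual bridge is the only connection in $C$ between $\sigma$ and the subcurve lying over that region. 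Thus the interior lattice points enclosed by $G_i$ are exactly those in $P_i$, and since each component of a sprawling graph has positive genus, each of $P_1,P_2,P_3$ must contain at least one interior lattice point of $P$.

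The heart of the argument---and the step I expect to be the main obstacle---is to show this last configuration is impossible. The three outward edge-normals $u_1,u_2,u_3$ of the unimodular triangle $\sigma$ satisfy $u_1+u_2+u_3=0$ (this is just the balancing condition at $s$) and hence positively span $\R^2$, and the vertex opposite each edge lies at lattice distance exactly $1$ from the line through that edge. An interior lattice point in region $P_i$ therefore lies strictly beyond edge $i$, at lattice distance at least one on the far side of that line from the opposite vertex. Carrying out a careful analysis of the supporting lines at $v_1,v_2,v_3$---equivalently, of the normal cones of $\sigma$ at its three vertices, where the normal cone at $v_1$ is spanned by $u_2,u_3$, and cyclically---I would show that three such points, one in each region, surround $\sigma$ so tightly that at least one vertex $v_i$ is forced into the interior of the convex hull of the six points $v_1,v_2,v_3$ together with the three interior points, and hence into $\interior(P)$. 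This contradicts the conclusion that every $v_i$ lies on $\partial P$, completing the proof.

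The delicate aspect of this final step is that no single prescribed pair of the interior points traps a fixed vertex: which vertex gets enclosed changes as the configuration varies, so one cannot simply exhibit a universal barycentric identity. The cleanest route is therefore likely a Farkas-type feasibility argument showing that the system consisting of the three supporting-functional inequalities at $v_1,v_2,v_3$, together with the three ``beyond the edge'' inequalities satisfied by the interior points, is infeasible. I would organize this by writing each supporting normal $n_i$ in the appropriate normal cone and combining the resulting inequalities against $u_1+u_2+u_3=0$ to force a contradiction, using the integrality (lattice distance $\geq 1$) to rule out the boundary cases where a vertex merely sits on an edge of the hull.
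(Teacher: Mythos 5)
Your reduction is sound, and it is in fact the same reduction used in the sources the paper cites for this lemma ([CDMY, Proposition~4.1] and [BJMS, Proposition~8.3]; the paper itself gives no proof): the sprawling vertex $s$ is dual to a unimodular triangle $\sigma$ lying on no cycle, so no vertex of $\sigma$ is an interior lattice point of $P$, and since each component of $G\setminus\{s\}$ has genus at least one, each of the three regions of $P$ beyond the edges of $\sigma$ contains an interior lattice point at lattice distance at least $1$ beyond the corresponding edge line. (Your genus observation also implicitly handles the degenerate case where an edge of $\sigma$ would lie on $\partial P$.) The problem is the final step, which you correctly identify as the crux but then assert in a form that is false. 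Normalize $\sigma=\mathrm{conv}\{(0,0),(1,0),(0,1)\}$ and take $p_1=(1,-1)$, $p_2=(-1,1)$, $p_3=(1,1)$: these satisfy all three ``beyond the edge'' conditions, yet the hull of the six points is $H=\mathrm{conv}\{(1,-1),(1,1),(-1,1)\}$, and \emph{all three} vertices of $\sigma$ lie on $\partial H$ --- $(1,0)$ on the edge $x=1$, $(0,1)$ on the edge $y=1$, and $(0,0)$ on the edge $x+y=0$. So no vertex is forced into $\interior(H)$, and integrality does not rule this configuration out; indeed, if you carry out the Farkas analysis you sketch (supporting normals constrained to the three normal cones, paired against the nine inequalities), this turns out to be exactly the unique feasible configuration with all three vertices on $\partial H$.

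The repair uses a different mechanism than the one you propose: since $p_1,p_2,p_3$ are \emph{interior} points of $P$ and the interior of a convex set is convex, $\mathrm{conv}\{p_1,p_2,p_3\}\subseteq\interior(P)$. In the exceptional configuration above, $(0,0)$ is the midpoint of $p_1p_2$, hence lies in $\interior(P)$, contradicting that every vertex of $\sigma$ lies on $\partial P$. So the correct dichotomy is: either some vertex of $\sigma$ lies in $\interior(H)$ (and $\interior(H)\subseteq\interior(P)$), or some vertex lies in $\mathrm{conv}\{p_1,p_2,p_3\}$; in both cases a vertex of $\sigma$ becomes an interior lattice point of $P$, the desired contradiction. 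Beyond this misstatement, note that your proposal never actually executes the feasibility analysis --- it is deferred as ``the main obstacle'' --- and that analysis \emph{is} the proof of the key claim; as written, the argument is a plan whose pivotal assertion fails in one genuine boundary case, rather than a complete proof.
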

This was originally proven in \cite[Proposition 4.1]{CDMY}.  A proof also appears in \cite[Proposition~8.3]{BJMS}.

 The next obstruction is a priori more difficult to check, as it is a property that must hold over all planar embeddings of a graph.
 
\begin{definition} A planar embedding of a connected, trivalent planar graph $G$ is called \emph{crowded}
 if either:  there exist two bounded faces sharing at least two edges; or, there exists a bounded face sharing an edge with itself.  If all planar embeddings of such a $G$ are crowded, we say that $G$ is \emph{crowded}.
\end{definition}

Equivalently, we can say that a planar embedding is crowded if the dual planar graph is not simple; that is, if it has at least one loop or pair of parallel edges.The idea of crowdedness is related to the notion of strong regularity of CW complexes \cite[\S 2.2]{BSS}.  A CW complex is said to be \emph{strongly regular} if no face is glued to itself; if no two faces are glued together more than once; and if edges have distinct end-points, and pairs of edges have at most one endpoint in common.   An embedding of a graph is crowded if and only if one of the first two conditions is violated.  However, a noncrowded graph may have loops or bi-edges, which violate strong regularity.
 
\begin{example}\label{crowded_example} {Consider the graph $G$ in Figure \ref{figure:crowded_example}, shown with two different embeddings.  Combinatorially, these are the only two planar embeddings of $G$.  This can be seen by noting that for each of the three bi-edges in the graph, the two adjacent edges must point either both outwards or both inwards (otherwise the graph would have a bridge).  In fact, these edges can point inward for at most one bi-edge, and from there the embedding is determined.  Since both these embeddings are crowded, we conclude that $G$ is a crowded graph.
}
\end{example}

\begin{figure}[hbt]
\centering{
\includegraphics{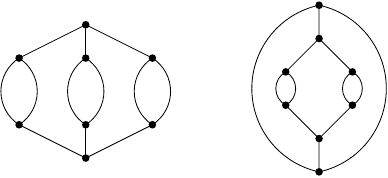}
}
    \caption{Two crowded embeddings of the same graph}
  \label{figure:crowded_example}
\end{figure}

 There are no crowded trivalent graphs of genus $g\leq 4$, as can be checked by consulting \cite{Bal}, which enumerates all trivalent connected graphs up to genus $6$.  In genus $5$ there are seven crowded graphs, depicted in Figure \ref{figure:genus5_nonrealizable_graphs}.  Proving crowdedness for these amounts to arguments to that of Example \ref{crowded_example}.

\begin{lemma}  \label{proposition:crowded}
Crowded graphs are never the skeletons of smooth tropical plane curves.
\end{lemma}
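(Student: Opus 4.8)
The plan is to prove the contrapositive: I will show that the skeleton of a smooth tropical plane curve always admits at least one planar embedding that is \emph{not} crowded, namely the embedding it inherits from its ambient copy in $\mathbb{R}^2$. Since a graph is declared crowded only when \emph{every} planar embedding is crowded, exhibiting one non-crowded embedding shows the skeleton is not a crowded graph, which is exactly what the lemma asserts.

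First I would set up the embedding and record the relevant duality. Let $C$ be a smooth tropical plane curve dual to a unimodular triangulation $\Delta$ of a polygon $P$, and let $\Gamma\subset C\subset\mathbb{R}^2$ be its skeleton with the planar embedding inherited from $\mathbb{R}^2$; this is connected, trivalent, and planar, so the notion of crowdedness applies. The bounded connected components of $\mathbb{R}^2\setminus C$ are convex, have pairwise disjoint interiors, and are in bijection with the interior lattice points of $P$, while each edge of $C$ is dual to a unique edge of $\Delta$ whose two endpoints index the two regions flanking that edge. Passing from $C$ to $\Gamma$ only deletes the unbounded rays and the finite trees dangling into the unbounded region and smooths the resulting $2$-valent vertices, so this operation preserves $H_1$; consequently the bounded faces of $\Gamma$ are again in bijection with the interior lattice points of $P$, with each face of $\Gamma$ enclosing exactly one bounded region of $C$.

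Next I would verify the two non-crowding conditions. For the first, suppose two bounded faces of $\Gamma$, dual to interior points $p$ and $q$, share an edge. Their convex regions in $C$ are then adjacent, so their common boundary is a single segment, dual to the \emph{unique} edge $pq$ of the simplicial complex $\Delta$. Because both flanking regions are bounded, the two triangles incident to $pq$ each have at least two interior vertices, so their dual vertices are genuinely trivalent in $\Gamma$ and survive the smoothing; the shared segment is therefore a single edge of $\Gamma$, and the two faces share exactly one edge. For the second condition, suppose some bounded face $F$ of $\Gamma$ meets an edge $e'$ on both sides. Then $e'$ is a bridge, and the component of $\Gamma\setminus e'$ on its far side is surrounded by $F$; as $\Gamma$ is leafless this component contains a cycle, which encloses a bounded region of $C$ nested strictly inside the bounded region enclosed by $F$. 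This is impossible, since the bounded regions of $C$ are distinct cells of a polyhedral subdivision of $\mathbb{R}^2$ and none contains another; equivalently, every bridge of $\Gamma$ has the unbounded region on both sides.

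These two facts say precisely that the inherited embedding of $\Gamma$ is not crowded, which completes the argument. I expect the main obstacle to be the bookkeeping across the retraction from $C$ to $\Gamma$: the two combinatorial conditions are transparent for the convex cells of $C$ itself, and the real content is checking that deleting dangling trees and smoothing $2$-valent vertices neither merges the boundary between two bounded faces into more than one edge nor lets a bridge become interior to a bounded face. Both points are controlled by the convexity and the non-nesting of the dual regions, so the crux of the proof is to state and apply these convexity facts cleanly rather than to perform any heavy computation.
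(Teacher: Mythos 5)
Your proposal is correct and is essentially the paper's own argument in contrapositive form: both proofs rest on the duality between bounded faces of the skeleton and interior lattice points of $P$, under which a shared edge corresponds to an edge of the triangulation $\Delta$, so two bounded faces can share at most the unique segment joining their two lattice points, and no face can share an edge with itself. Your extra bookkeeping about the retraction from $C$ to $\Gamma$ (and the convexity/non-nesting of the complement regions in the self-adjacency case) just makes explicit details the paper leaves implicit in its ``similar argument holds'' remark.
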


\begin{proof}  Suppose the skeleton of a smooth tropical plane curve is a planar embedding of a crowded graph $G$.  If the embedding has two bounded faces $F$ and $F'$ sharing at least two edges, then let $p$ and $p'$ be the corresponding interior lattice points of the tropical curve's Newton polygon.  Since $F$ and $F'$ share at least two edges, $p$ and $p'$ must be connected by at least two edges in the corresponding triangulation of the Newton polygon.  This is impossible, since the only possible edge between $p$ and $p'$ is the unique line segment connecting them.  A similar argument holds if the embedding of $G$ has a bounded face sharing an edge with itself.  These contradictions prove the claim.
\end{proof}

\begin{figure}[hbt]
\centering{
\includegraphics[scale=0.8]{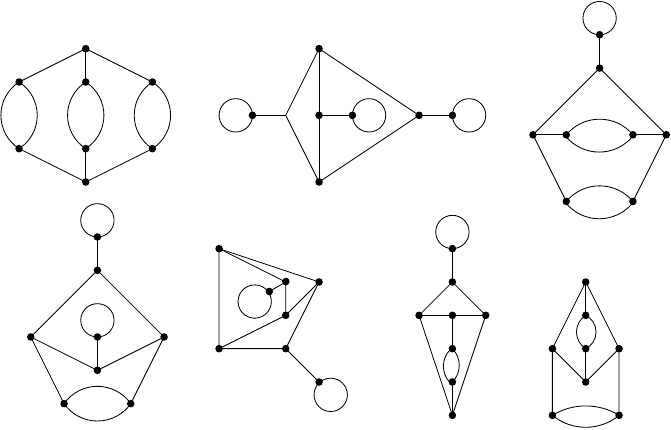}
}
    \caption{The seven crowded graphs of genus $5$}
  \label{figure:genus5_nonrealizable_graphs}
\end{figure}

We now develop several criteria for showing that a graph is crowded.

\begin{proposition}  Suppose $G$ is a planar graph obtained by connecting two connected planar graphs $G_1$ and $G_2$, each of genus at least one, with a pair of edges.  Let $G_1'$ be obtained from $G$ by deleting $G_2$ and replacing it with a bi-edge.  If $G_1'$ is crowded, then so is $G$.
\label{prop_surgery}
\end{proposition}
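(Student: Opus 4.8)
The plan is to fix an \emph{arbitrary} planar embedding $\iota$ of $G$ and transfer the crowding of $G_1'$ onto it, so that $\iota$ is forced to be crowded; since $\iota$ is arbitrary, $G$ is crowded. Write $e_1,e_2$ for the two edges joining $G_1$ to $G_2$, with $e_i$ meeting $G_1$ at a vertex $a_i$. First I would observe that $e_1\cup G_2\cup e_2$ is connected and meets $G_1$ only at $a_1$ and $a_2$; hence in $\iota$ this entire blob lies inside a single face $F$ of the embedding of $G_1$ induced by $\iota$, with $a_1,a_2$ on $\partial F$. Replacing the blob by the biedge (its two nodes $u,v$ receiving the loose ends of $e_1$ and $e_2$, placed inside $F$) then produces a genuine planar embedding $\iota'$ of $G_1'$ in which $G_1$ is embedded exactly as in $\iota$.

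Next I would set up a face correspondence between $\iota'$ and $\iota$. The path $e_1\cup\{uv\}\cup e_2$ (respectively $e_1$, a boundary arc of $G_2$, and $e_2$) cuts $F$ into two regions $R_\alpha,R_\beta$ lying on the two sides of $\partial F$, while the biedge contributes one extra interior face $I$ (respectively $G_2$ contributes its interior faces). Every face of $G_1'$ other than $I$ then matches a face of $G$ carrying the identical set of boundary edges among the edges common to both graphs, namely the edges of $G_1$ together with $e_1$ and $e_2$. In particular $R_\alpha$ and $R_\beta$ share exactly $e_1,e_2$, and so do their counterparts in $\iota$; moreover the unbounded face of $\iota'$ and of $\iota$ occupy the same portion of $F$, so boundedness is preserved under the correspondence.

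Now I would invoke the hypothesis: $G_1'$ is crowded, so $\iota'$ is crowded, witnessed either by two bounded faces sharing edges $g,h$ or by a bounded face carrying some edge twice. The crux is to rule out the inserted biedge as the source of this crowding. The interior face $I$ is bounded by the two biedge edges $f_1,f_2$, and each $f_j$ borders $I$ on one side and exactly one of $R_\alpha,R_\beta$ on the other; hence $I$ meets each of its neighbours in a single edge and carries neither $f_1$ nor $f_2$ twice. Thus $I$ cannot participate in any crowding, and neither $f_1$ nor $f_2$ can be among the witnessing edges $g,h$. It follows that $g,h$ are genuine edges of $G$, and the faces they witness correspond, with the same boundary edges and the same boundedness, to faces of $\iota$; the crowding therefore persists in $\iota$, completing the argument.

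The step I expect to be the main obstacle is precisely this last certification: showing that the crowding detected in $G_1'$ is never an artifact of the inserted biedge. One must check carefully that the only faces changed by the collapse are $I$ versus the interior faces of $G_2$ (the splitting of $F$ into $R_\alpha,R_\beta$ occurs identically in both graphs), and that $I$ meets each neighbouring face in exactly one edge. Granting this, the collapse and its inverse preserve all face-adjacency data that could witness crowding, and the transfer is immediate.
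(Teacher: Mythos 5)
Your proof is correct, and at its core it is the same argument as the paper's: fix an arbitrary embedding of $G$, perform the surgery to obtain an embedding of $G_1'$, invoke crowdedness of $G_1'$, show the bi-edge face cannot be the witness, and transfer the witness back. The genuine difference is that you treat all configurations uniformly, while the paper splits into cases: it first disposes of the ``enveloping'' configurations (where one of $G_1$, $G_2$ sits inside a bounded face of the other) by observing the embedding of $G$ is already visibly crowded there --- this is where the genus-at-least-one hypothesis is used --- and only performs the surgery in the side-by-side configuration, placing the bi-edge so that its face $F'$ shares exactly one edge with exactly one bounded face. Your replacement-in-place inside whatever face $F$ of $G_1$ contains the blob, keyed to the robust observation that the lens face $I$ shares at most one edge with \emph{any} face (bounded or not), subsumes the case analysis and in fact never uses the genus hypotheses; that is a modest gain in uniformity and economy.

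One point in your transfer is stated more strongly than is true, though the proof survives. When $G_2$ encircles $G_1$ (so $F$ is the unbounded face of $G_1$'s induced embedding and a cycle of $G_2$ winds around $G_1$), the unbounded face of $\iota$ is bounded entirely by $G_2$-edges and has \emph{no} counterpart under your correspondence --- it is absorbed into $I$ when the blob collapses --- while the unbounded face of $\iota'$ is one of $R_\alpha, R_\beta$, whose counterpart in $\iota$ is a \emph{bounded} face. So ``the unbounded face of $\iota'$ and of $\iota$ occupy the same portion of $F$'' and ``boundedness is preserved under the correspondence'' fail verbatim in this configuration. Fortunately the failure is in the harmless direction: the only unbounded face of $\iota$ lies outside the image of your correspondence, so every face of $\iota'$ other than $I$ that is bounded still has a bounded counterpart sharing the same common edges with the same multiplicities, which is all your final step uses. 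You should replace the symmetric preservation claim with this one-directional statement (checking the enveloping configuration explicitly); with that repair the argument is complete.
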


\begin{proof}  Assume that  $G_1'$ is crowded, and consider any planar embedding of $G$.  We will show that this is a crowded embedding. If the embedding has either $G_1$ or $G_2$ enveloping the other, then it is crowded since the inner graph has genus at least one, as illustrated in the first two images of Figure \ref{figure:crowded_surgery}. The possibility that one connecting edge is enveloped and one is not is ruled out by the supposition that $G_1$ and $G_2$ are connected. Thus we may assume the configuration is as in the third image. 

\begin{figure}[hbt]
\centering{
\includegraphics{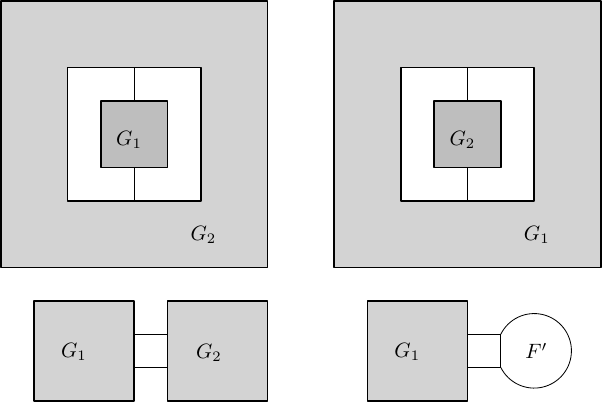}
}
    \caption{Possible configurations of $G_1$ and $G_2$ within $G$, the first two visibly crowded and the third yielding the resulting graph $G_1'$ on the bottom right}
  \label{figure:crowded_surgery}
\end{figure}

Delete $G_2$ from our embedding of $G$, and replace it with a bi-edge so that the bi-edge bounds a face $F'$ as illustrated, without wrapping either edge around $G_1$. Note that each bounded face of $G_1'$ besides $F'$  has a corresponding bounded face in $G$.  Since $G_1'$ is crowded, this embedding of $G_1'$ must be a crowded embedding, so either two bounded faces share two or more edges or some bounded face shares an edge with itself.  However, $F'$ cannot be any of these problematic faces, since it shares exactly one edge with exactly one bounded face, and none with itself. Thus the crowded configuration of faces must appear in $G$ as well, so this embedding of $G$ is crowded.  As this embedding was arbitrary, we conclude that $G$ is crowded.
\end{proof}

This result can be used to prove the following corollary, which will be key in determining which hyperelliptic graphs appear in tropical plane curves.

\begin{corollary}\label{crowded_corollary}  Let $G$ be a trivalent connected planar graph of the form illustrated in Figure \ref{figure:crowded_hyperelliptic}, where each unknown box contains a graph of genus at least one. Then $G$ is crowded.

\begin{figure}[hbt]
\centering
\includegraphics[scale=0.96]{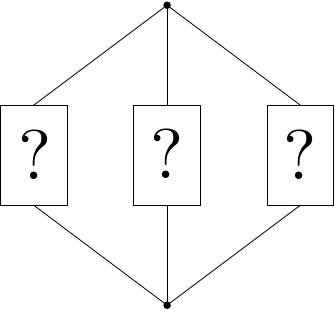}
\caption{A family of graphs that are always crowded}
\label{figure:crowded_hyperelliptic}
\end{figure}
\end{corollary}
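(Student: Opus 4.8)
The plan is to deduce the crowdedness of $G$ from Proposition \ref{prop_surgery}, using that surgery result to strip off the unknown boxes one at a time, replacing each by a biedge, until we are left with a single small explicit graph whose crowdedness can be verified directly as in Example \ref{crowded_example}.

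In detail, let $B$ be one of the boxes appearing in Figure \ref{figure:crowded_hyperelliptic}. By hypothesis $B$ is a connected planar graph of genus at least one, and in the figure it is attached to the remainder of $G$ by exactly a pair of edges. Setting $G_2 = B$ and letting $G_1$ denote the rest of $G$ (which has genus at least one, since it contains at least one further box), the hypotheses of Proposition \ref{prop_surgery} are satisfied. Writing $G_1'$ for the graph obtained by deleting $B$ and inserting a biedge in its place, the proposition tells us it suffices to prove that $G_1'$ is crowded. Applying this step to each box in turn---at every stage the ``other side'' retains genus at least one, because each previously inserted biedge already contributes genus one---reduces the problem to showing that the single graph $G_0$, obtained from $G$ by replacing every box with a biedge, is crowded.

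It thus remains to prove that $G_0$ is crowded, which is the combinatorial core of the argument and the step I expect to be the main obstacle. Here I would reason exactly as in Example \ref{crowded_example}: in any planar embedding of $G_0$, the two edges meeting a given biedge at its endpoints are constrained (they must both point the same way relative to the biedge, else the embedding would display a bridge), and this sharply limits the possible embeddings. I would then enumerate the finitely many resulting combinatorial embeddings and check that each one contains either two bounded faces sharing at least two edges or a bounded face sharing an edge with itself. The delicate point is that the branching dictated by the ``not a chain'' shape of $G_0$ prevents all of the biedge $2$-gons from being pushed simultaneously against the unbounded face; consequently some crowding configuration must appear in every embedding, which is precisely what is needed to conclude that $G_0$, and hence $G$, is crowded.
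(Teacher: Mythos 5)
Your proposal is correct and follows essentially the same route as the paper, which likewise applies Proposition \ref{prop_surgery} once per box (three times in total) to replace each box by a biedge. The one simplification you missed is that the resulting graph $G_0$ is precisely the graph of Example \ref{crowded_example}, whose crowdedness was already verified there, so the base-case enumeration you flag as the main obstacle requires no further work.
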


\begin{proof}  Perform the surgery from Proposition \ref{prop_surgery} three times on $G$, thereby producing the graph from Example \ref{crowded_example}.  Since that graph is crowded, three applications of Proposition \ref{prop_surgery} imply that $G$ is crowded as well.
\end{proof}

\begin{example}\label{example:nd} Let $g\geq 5$. Using this corollary, we can construct an example of a metric graph $G$ that appears in $\textrm{trop}\left(\mathcal{M}_g^{nd}\right)$ but not in $\mathbb{M}_g^{\textrm{planar}}$, illustrating that  $\mathbb{M}_g^{\textrm{planar}}\subsetneq\textrm{trop}\left(\mathcal{M}_g^{nd}\right)$ for $g\geq 5$.  (This was shown to be true for $g=3$ and $g=4$ in \cite{BJMS}.)

Let $T$ be any metric tree with $g-1$ vertices such that the maximum degree of any vertex is equal to $3$; since $g-1\geq 4$, such a tree exists.  Construct the ladder $G$ associated to $T$.  This graph is hyperelliptic and bridgeless, and thus is the graph associated to a hyperelliptic algebraic curve by \cite[Corollary 4.15]{ABBR}.  As shown in \cite[\S 5]{CV}, any hyperelliptic curve over an algebraically closed field is nondegenerate, so $G\in \textrm{trop}\left(\mathcal{M}_g^{nd}\right)$.  However, $G$ has the structure from Corollary \ref{crowded_corollary} due to fact that $T$ has a degree $3$ vertex, implying that $G$ is crowded.  By Lemma \ref{proposition:crowded}, we have $G\notin \mathbb{M}_g^{\textrm{planar}}$.
\end{example}

Another way to show that a graph is crowded is based on particular subgraphs.  %In the following result and the subsequent section we will refer to a \emph{$2$-edge-connected component} of a graph $G$.  This is a subgraph of $G$ that is a connected component of the graph obtained by deleting all bridges from $G$ and smoothing over $2$-valent vertices.  Note that a $2$-edge-connected component will be a $2$-edge connected graph.  If $G$ is sprawling, then the vertex that witnesses this property will be a $2$-edge-connected component.  A \emph{nontrivial} $2$-edge-connected component is one that is not a vertex.

\begin{lemma}\label{lemma:subgraph} Let $G'$ be a $2$-edge-connected component of a planar graph $G$.  If $G'$ is crowded, then so is $G$. 
\end{lemma}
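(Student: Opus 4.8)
The plan is to show that \emph{any} planar embedding of $G$ is crowded by transporting the offending configuration guaranteed inside $G'$ out to all of $G$. Fix an arbitrary planar embedding of $G$. Deleting from it every edge and vertex not belonging to $G'$ and smoothing over the resulting $2$-valent vertices produces a planar embedding of $G'$. Since $G'$ is crowded, this induced embedding is crowded, so it contains an offending configuration: either two bounded faces $F_1,F_2$ of $G'$ sharing at least two edges, or a bounded face $F$ of $G'$ sharing an edge with itself.

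Next I would analyze how the rest of $G$ sits relative to this embedding of $G'$. Because $G'$ is a maximal $2$-connected component, every edge and vertex of $G$ outside $G'$ lies in a connected piece that meets $G'$ in a single cut vertex; a piece meeting $G'$ in two distinct points would, together with an arc of $G'$, complete a cycle and hence lie in the same $2$-connected component, a contradiction. In the embedding, each such piece therefore sits inside a single bounded face $F$ of $G'$, attached at one point $v$ of $\partial F$. The key topological observation is that a connected set attached to the boundary circle of a closed disk at a single point does not separate the complementary region that touches the remainder of the boundary; and distinct pieces attach at distinct points and are pairwise disjoint within $F$. Consequently, for each face $F$ of $G'$ there is a single bounded face $f_F$ of $G$ whose boundary walk contains every edge of $\partial F$ (with the external pieces merely spliced in at their attachment points). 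I expect this bookkeeping --- verifying that the intrusions do not detach the edges of $\partial F$ from one another --- to be the main obstacle, although it reduces to the single-point-attachment observation above.

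Finally I would transfer the configuration. In the first case, both faces $f_{F_1}$ and $f_{F_2}$ of $G$ carry, on their boundaries, the two edges shared by $F_1$ and $F_2$ (possibly subdivided into several edges of $G$ by attachment points, which only raises the count), so $f_{F_1}$ and $f_{F_2}$ share at least two edges. In the second case, both occurrences of the self-shared edge of $F$ lie on $\partial f_F$, so $f_F$ shares an edge with itself. Since each $F_i$ is bounded and $f_{F_i}$ is contained in $F_i$, these faces of $G$ are bounded as well. Hence the embedding of $G$ is crowded, and as it was arbitrary, $G$ is crowded.
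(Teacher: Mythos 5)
Your proof is correct and takes essentially the same route as the paper's: fix an arbitrary embedding of $G$, restrict it to $G'$, invoke the crowdedness of $G'$, and observe that reinserting the rest of $G$ preserves the bounded faces of $G'$ while only possibly increasing the number of shared edges. The one difference is cosmetic: you spell out the single-cut-vertex attachment argument (each outside piece meets $G'$ in one vertex and so cannot disconnect any boundary walk) that the paper compresses into the assertion that ``the faces $F_1,\ldots,F_k$ are preserved as faces.''
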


\begin{proof} Choose any embedding of $G$, then delete everything that is not part of $G'$, smoothing over the resulting $2$-valent vertices.  Label the bounded faces $F_1,\ldots, F_k$.  Now add back in the rest of $G$.  Since $G'$ is a $2$-edge-connected component of $G$, the faces $F_1,\ldots, F_k$ are preserved as faces, and the number of edges shared by (not necessarily distinct)  pairs amongst $F_1,\ldots, F_k$ have either remained the same or increased.  Thus either two bounded faces of $G$ share two edges, or one bounded face shares an edge with itself, and the embedding is crowded.   Since this was an arbitrary embedding of $G$, we conclude that $G$ is crowded.
\end{proof}

 Note that is possible for a graph to have a crowded subgraph (perhaps with  $2$-valent vertices smoothed over)  without itself being crowded, as long as that subgraph is not a $2$-edge-connected component.  See Figure~\ref{figure:subgraph} for an example.
 
 \begin{figure}[hbt]
\centering
\includegraphics{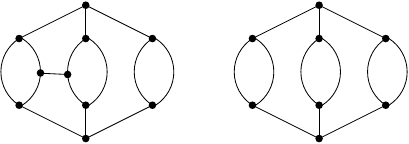}
\caption{A graph that isn't crowded, but has a crowded subgraph}
\label{figure:subgraph}
\end{figure}

\section{Combinatorial obstructions for hyperelliptic graphs}
\label{section:combinatorial}

Armed with our sprawling and crowded criteria, we are now ready to prove that chains are the only combinatorial types of hyperelliptic graphs that appear as the skeleton of a smooth tropical plane curve.  We begin with the case of $2$-edge-connected graphs.

\begin{proposition}\label{proposition:2-edge-connected} If a trivalent hyperelliptic graph is $2$-edge-connected, either it is a chain or it is crowded.
\end{proposition}

We remark that the $2$-edge-connected assumption is vital.  Without it, we could have any of the four graphs in Figure \ref{figure:sprawling_graphs}, all of which are hyperelliptic but none of which are chains or crowded.

\begin{proof} Let $G$ be a $2$-edge-connected hyperelliptic graph of genus $g$. By Proposition \ref{ladder_proposition}, $G$ is a ladder over a tree with $g-1$ vertices, each with valency at most three.  Note that $G$ is the $2$-edge-connected chain of genus $g$ if and only if the tree is a line segment.  Assume $G$ is not a chain.  Then the tree must contain a trivalent vertex and so $G$ is of the form shown in Figure~\ref{figure:crowded_hyperelliptic}, where each unknown box contains at least one bi-edge.  Corollary \ref{crowded_corollary} implies that $G$ is crowded.
\end{proof}

\begin{proposition}
\label{proposition:mustbechain}  If $G$ is the hyperelliptic skeleton of a smooth tropical plane curve, then $G$ is a chain.
\end{proposition}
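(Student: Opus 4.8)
The plan is to reduce the general, possibly non-$2$-connected, case to the $2$-connected case already settled by Proposition \ref{proposition:2-connected}, and then to control how the $2$-connected pieces are glued together by bridges. Throughout, $G$ is trivalent, connected, and planar, and by Lemmas \ref{prop:sprawling} and \ref{proposition:crowded} it is neither sprawling nor crowded; the goal is to show that forbidding these two obstructions forces $G$ to be a chain.

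First I would invoke the structural description of hyperelliptic graphs recalled after Proposition \ref{ladder_proposition}: deleting the bridges of $G$ and smoothing over the resulting $2$-valent vertices yields the $2$-connected components, each of which is a ladder, a genus-$1$ loop, or a point, with the bridges reattached at fixed points of the involution. Each $2$-connected component is itself a $2$-connected trivalent hyperelliptic graph, so by Proposition \ref{proposition:2-connected} it is either a ($2$-connected) chain or crowded. If any component were crowded, then Lemma \ref{lemma:subgraph} would make $G$ crowded, contradicting that $G$ is a skeleton; hence every $2$-connected component of $G$ is a $2$-connected chain, a loop, or a point. Since $G$ is leafless, every bridge leads to a subgraph of genus at least one, and since $G$ is trivalent, distinct $2$-connected components can only be joined by bridges.

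It remains to show that these pieces are strung together linearly, that is, that the underlying tree of blocks and bridges is a path; together with the previous paragraph this is exactly the statement that $G$ is a chain. I would rule out the two ways this tree could branch. If three bridges met at a common vertex $s$, then $G\backslash\{s\}$ would have three components and $G$ would be sprawling, a contradiction; this caps the number of bridges meeting at any vertex. The remaining possibility is that a single block carries bridges at three or more of its fixed points. Here I would argue by planarity: the fixed points of a ladder are the midpoints of its rungs, and in any planar embedding at most two rungs of a $2$-connected chain are accessible from the unbounded face (for the genus-$2$ ``theta'' this is the statement that one of the three parallel edges is always interior, and the general theta-chain is analogous). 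A third bridge is therefore trapped inside a bounded face $F$; because a bridge is traversed on both sides by a single face, the bounded face wrapping around that bridge shares an edge with itself, so the embedding is crowded. As this holds for \emph{every} embedding, $G$ would be crowded, again a contradiction. A loop contributes only two fixed points, so it never triggers this case.

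With three-bridge vertices and blocks bearing three or more bridges both excluded, every block meets at most two bridges and every vertex meets at most two, so the tree of blocks and bridges is a path; the bridges attach only at the two extremal fixed points of each block, gluing the chains and loops end to end into a chain. I expect the main obstacle to be the planarity step of the previous paragraph: making rigorous, for an arbitrary $2$-connected chain and in every one of its planar embeddings, that at most two fixed points are reachable from the unbounded face, so that a third bridge is genuinely forced into a bounded face and produces a self-adjacent face (one could alternatively package this branching configuration to match Figure \ref{figure:crowded_hyperelliptic} and apply Corollary \ref{crowded_corollary}). The sprawling and subgraph reductions are comparatively routine given the results already established.
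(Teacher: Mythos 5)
Your proposal is correct and follows essentially the same route as the paper: reduce to $2$-connected components via Lemma \ref{lemma:subgraph} and Proposition \ref{proposition:2-connected}, use sprawling to forbid three bridges at a vertex, use crowdedness to show a block can carry bridges only at its two extreme rungs (the paper phrases this as fixing the standard chain embedding and noting that a bridge from any interior $e_i$ traps a genus-$\geq 1$ subgraph inside a bounded face, making that face share an edge with itself), and conclude the block--bridge tree is a path. Even the step you flag as the main obstacle is handled at the same level of rigor in the paper, which simply asserts that any non-standard embedding of a chain is crowded ``as can be checked by inductively building the embedding loop by loop.''
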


Our proof of this proposition will show something even stronger:  that if a graph is the skeleton of a smooth tropical plane curve, and there exists \emph{some} metric on that graph that makes it hyperelliptic (not necessarily the metric given by that embedding), then that graph must be a chain under that new metric.  

\begin{proof}  Let $G$ be such a hyperelliptic skeleton, meaning it comes with a given embedding into the plane which cannot be a crowded embedding by Lemma \ref{proposition:crowded}.  Each 2-edge-connected component of $G$ must be hyperelliptic and  noncrowded by Lemma \ref{lemma:subgraph}, and so by Proposition \ref{proposition:2-edge-connected} each 2-edge-connected component of $G$ must be either a chain, a loop, or a vertex.

Let $G'$ be a  $2$-edge-connected component of $G$ of genus at least $2$. Then $G'$ inherits a non-crowded embedding into the plane from $G$.  Since $G'$ is a chain, it must be in the standard chain embedding illustrated in Figure~\ref{figure:standardchain}: any other embedding is crowded, as can be checked by inductively building the embedding loop by loop.  The only bridges that could possibly connect $G'$ to the rest of $G$ are a bridge from the middle of $e_0$ and a bridge from the middle of $e_g$.  This is because a $2$-edge-connected component connecting to $G$ from any other $e_i$, or with multiple edges from $e_0$ or from $e_g$, would make a bounded face of $G$ share an edge with itself, giving a crowded embedding.  Similarly, if $G'$ has genus $1$, it must have at most two adjacent edges, positioned in the exterior of the loop.

\begin{figure}[hbt]
\centering
\includegraphics{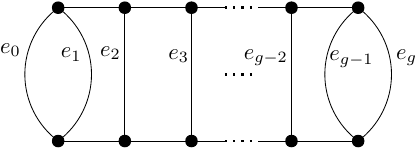}
\caption{The standard embedding of a chain, with vertical edges labelled $e_0$ to $e_g$}
\label{figure:standardchain}
\end{figure}

It follows that each $2$-edge-connected component of $G$ has at most one incoming and one outgoing edge.  As with any graph, shrinking the $2$-edge-connected components down to vertices yields a tree $T$.  Lemma \ref{prop:sprawling} implies that  $T$ must be a line segment: if $T$ had any trivalent vertices, the corresponding $2$-edge-connected component would have to be a vertex, and so  $G$ would be sprawling.  Considering the structure of each nontrivial $2$-edge-connected component, we conclude that $G$ must be a chain.
\end{proof}

\section{Metric obstructions}
\label{section:metric}

Proposition \ref{proposition:mustbechain} implies that $\mathbb{M}_g^{\textrm{planar}}\cap\mathbb{M}_{g,\textrm{hyp}}\subset \mathbb{M}_g^{\textrm{planar}}\cap \mathbb{M}_g^{\textrm{chain}}$.  However, it is not immediately clear that there is no contribution to $\mathbb{M}_g^{\textrm{planar}}\cap \mathbb{M}_g^{\textrm{chain}}$ from nonhyperelliptic polygons, which can give rise to graphs with the same combinatorial types as chains, and could \emph{a priori} have hyperelliptic metrics.  The following proposition rules this out, and is the last ingredient we need in order to prove Theorem \ref{thm:onlyhyperelliptic}.

\begin{proposition}  Let $G$ be the skeleton of a smooth tropical plane curve $C$ with nonhyperelliptic Newton polygon $P$.  If $G$ is combinatorially a chain, then the metric on $G$ is not hyperelliptic.
\label{proposition:chainmeanshyperelliptic}
\end{proposition}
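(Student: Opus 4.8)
The plan is to show that when the Newton polygon $P$ is nonhyperelliptic, the metric forced on a chain-shaped skeleton $G$ violates the equal-parallel-length criterion from \cite[Lemma 4.2]{BLMPR} quoted in the background: a chain is hyperelliptic precisely when each pair of parallel edges (the two copies of each original segment in the chain construction) have equal length. So the goal reduces to producing, for any triangulation of a nonhyperelliptic polygon yielding a chain-type graph, at least one pair of parallel edges whose lengths are forced to differ.

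First I would recall how edge lengths of the skeleton are read off the triangulation. For a smooth tropical plane curve dual to a unimodular triangulation $\Delta$ of $P$, each bounded edge of $C$ is dual to an interior edge of $\Delta$, and its lattice length is computed from the primitive direction and the combinatorics of $\Delta$; in particular the two parallel edges of a given cycle-pair in a chain correspond to two distinct interior edges of $\Delta$ incident to the relevant interior lattice point(s). Since $P$ is nonhyperelliptic, $P_{\textrm{int}}$ is a genuine two-dimensional polygon rather than a segment, so the interior lattice points are \emph{not} collinear. I would use this non-collinearity to locate an interior lattice point $p$ (or an adjacent pair) whose neighborhood in any triangulation cannot be symmetric in the way that equal parallel lengths demand.

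The key step is the geometric dichotomy. In the chain, each cycle is dual to an interior lattice point, and two parallel edges of equal length encode a reflective symmetry of the local fan at that point across the line containing $P_{\textrm{int}}$. When the interior points are collinear (the hyperelliptic case), the projection to the line perpendicular to $P_{\textrm{int}}$ is the harmonic $2$-to-$1$ map, and equal lengths arise naturally. When $P_{\textrm{int}}$ is two-dimensional, I would argue that some interior point $p$ has lattice neighbors that force its two bounding edges in the chain to have \emph{unequal} lattice length, because the balancing/primitive-vector data on the two sides differ. Concretely, I expect to compare the lattice lengths of the two parallel edges by expressing each as a determinant (or as a difference of neighboring vertex heights in the dual Newton subdivision) and showing the two determinants cannot coincide for all pairs once the configuration is genuinely planar.

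\textbf{The main obstacle} I anticipate is handling all chain combinatorial types uniformly: different chains attach loops/cycles with different local patterns (the binary-string data from the chain construction), so the ``offending'' pair of parallel edges may sit at different interior points depending on the type, and the argument must cover bridges-versus-shared-edges configurations simultaneously. I would manage this by arguing contrapositively: assume \emph{all} parallel pairs have equal length, and show this symmetry propagates along the chain to force every interior lattice point onto a common line, i.e.\ force $P_{\textrm{int}}$ to be a segment, contradicting nonhyperellipticity. This propagation argument — turning local edge-length equalities into a global collinearity statement about the interior lattice points — is the technical heart, and I would carry it out by tracking how the balancing condition at successive interior points links their positions, showing that equal parallel lengths at one cycle pin down the slope available at the next.
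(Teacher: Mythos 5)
Your opening reduction is the same as the paper's: both arguments hinge on \cite[Lemma 4.2]{BLMPR}, so it suffices to exhibit one two-cut of the chain whose two edges are forced to have different lattice lengths. But from that point on your proposal has a genuine gap: the technical heart is never carried out, and the mechanisms you sketch for it would not work as stated. Your claim that ``two parallel edges of equal length encode a reflective symmetry of the local fan'' at an interior lattice point is unjustified, and in fact false as a local statement: edge lengths in the skeleton are \emph{lattice} lengths, and a priori nothing prevents the two arcs of a cycle from having equal lattice length even when the local fan at the dual interior point is wildly asymmetric. The paper makes exactly this point in the remark following its proof: if $\ell_1=\ell_v=1$, $\ell_h=2$, and the long arc $e_2$ is a single segment of slope $2/3$, then $\ell_1=\ell_2$ locally; such configurations are excluded only by global convexity constraints on the lattice polygon $P$, not by any local determinant comparison of the kind you propose. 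So ``expressing each length as a determinant and showing the two cannot coincide'' fails without substantial additional input.

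The paper's actual route is local rather than your proposed global contrapositive propagation, and the locality is what makes it tractable. Since $P$ is nonhyperelliptic, the chain ordering of the interior lattice points contains a non-collinear consecutive triple $(p_i,p_{i+1},p_{i+2})$. Pick's theorem plus the chain structure (no extra triangulation edges at $p_{i+1}$ are allowed) force $\mathrm{conv}(p_i,p_{i+1},p_{i+2})$ to have area $\tfrac12$, so one may normalize to $p_i=(1,2)$, $p_{i+1}=(1,1)$, $p_{i+2}=(2,1)$; convexity of $P$ and the absence of the segment $p_ip_{i+2}$ in $\Delta$ then force $q=(2,2)$ to be a boundary point of $P$ joined to $p_{i+1}$, $p_i$, $p_{i+2}$ in $\Delta$. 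A second application of Pick's theorem (again using the chain condition to forbid further edges at $p_{i+1}$) pins every remaining neighbor of $p_{i+1}$ onto a coordinate axis, and a horizontal-width bookkeeping argument --- valid precisely because those neighbors have a zero coordinate, so the dual segments have integer slope and lattice length equal to horizontal width --- yields $\ell_2\geq \ell_1+\ell_h>\ell_1$ for the two-cut $\{e_1,e_2\}$ of the cycle $c_{i+1}$. Your alternative, assuming \emph{all} parallel pairs are equal and propagating that to collinearity of all interior points, is strictly harder than what is needed (one bad pair suffices), and you give no mechanism by which balancing alone would ``pin down the slope at the next point''; any honest execution of it would have to confront the same lattice-length-versus-width subtlety, at which point you would essentially be rebuilding the paper's local argument. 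To repair the proposal: drop the propagation plan, isolate a non-collinear \emph{consecutive} triple, and develop the Pick's-theorem control of the star of $p_{i+1}$ in $\Delta$ as above.
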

In the proof of this proposition, when we say ``$G$ is a chain'', we mean that $G$ is a chain combinatorially, possibly with a nonhyperelliptic metric.

\begin{proof}  
Let  $\Delta$ be the unimodular triangulation of $P$ dual to the smooth tropical plane curve $C$ with skeleton $G$.
 The order on the distinguished cycles $c_1,\ldots,c_g$ of $G$ induces a natural ordering on the interior lattice points of $P$, which we will call $p_1,\ldots, p_g$.  Since $P$ is nonhyperelliptic, there exists some triple $(p_{i},p_{i+1},p_{i+2})$ of these interior lattice points that are not collinear.  We will assume that the cycle $c_{i+1}$ shares an edge with $c_{i}$ and an edge with $c_{i+2}$; the other cases with at least one bridge coming from $c_{i+1}$ are handled similarly.  Dually, this means that $\Delta$ contains the line segments $\overline{p_ip_{i+1}}$ and $\overline{p_{i+1}p_{i+2}}$.
 
Consider the triangle $T=\text{conv}(p_{i},p_{i+1},{p_{i+2}})$, which does not intersect the boundary of $P$ since its vertices are interior to $P$.  We claim that $T$ has area $\frac{1}{2}$.  If not, then by Pick's theorem, $T$ must contain at least one more interior lattice point $p'$, which must lie either in the interior of the triangle or on the edge $\overline{p_{i}p_{i+2}}$.  Either way, $\Delta$ must include an edge connecting $p_{i+1}$ to $p'$, violating the chain structure of $G$.  Thus, $T$ has area $\frac{1}{2}$, and so after a change of coordinates we may assume $p_i=(1,2)$, $p_{i+1}=(1,1)$, and $p_{i+2}=(2,1)$.  

Since $G$ is a chain, the triangulation  $\Delta$ does not contain the line segment $\overline{p_{i}p_{i+2}}$.  This means that some line segment in $\Delta$ containing $p_{i+1}$ must separate $p_{i}$ and $p_{i+2}$.  By the convexity of $P$, this means that the point $q=(2,2)$ is contained in $P$, and in fact $\overline{p_{i+1}q}$ is a line segment in $\Delta$.  Since $G$ is a chain, it follows that $q$ is a boundary point of $P$.  Since $P$ is convex, there is no segment in $\Delta$ containing $p_{i+1}$ that separates  $p_i$ from $q$, or $p_{i+2}$ from $q$. It follows that $\overline{p_iq}$ and $\overline{p_{i+2}q}$ are both segments in the triangulation $\Delta$.  In the dual tropical curve, let $e_h$ be dual to $\overline{p_{i}p_{i+1}}$; $e_v$ be dual to $\overline{p_{i+1}p_{i+2}}$; $e_1$ be dual to $\overline{p_{i+1}q}$; and $e_2$ be the remainder of the cycle $c_{i+1}$.  This is illustrated in Figure~\ref{figure:nonhyp}.  Let $\ell_h$, $\ell_v$, $\ell_1$, and $\ell_2$ denote the lengths of these edges, respectively.

\begin{figure}[hbt]
\centering
\includegraphics{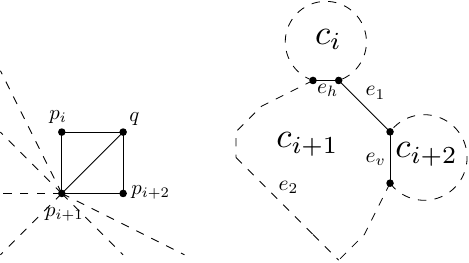}
\caption[A partial triangulation of a nonhyperelliptic polygon]{A portion of the triangulation $\Delta$ of $P$, and part of the dual tropical curve}
\label{figure:nonhyp}
\end{figure}

Let $q_1,q_2,\ldots,q_n$ denote the lattice points of $P$ that $\Delta$ connects to $p_{i+1}$, ordered counterclockwise starting with $q$ (so that $q_1=q$, $q_2=p_{i}$ and $q_n=p_{i+2}$).  Let $q_j=(a_j,b_j)$.  For $3\leq j\leq n-1$, at least one of $a_j$ and $b_j$ must be nonpositive due to the placement of $p_i$ and $p_{i+2}$.   In fact, we claim that for all $j$, at least one of $a_j$ and $b_j$ is equal to $0$. Suppose not.  Then some $q_j=(a_j,b_j)$ is in the interior of either the second, third, or fourth quadrant of $\mathbb{R}^2$.  Assume for the moment that it is the second quadrant.  Then the triangle $\text{conv}(p_i,p_{i+1},q_j)$ has area strictly greater than $1/2$ due to its base and height, meaning by Pick's theorem that the triangle must contain at least a fourth lattice point $p'$.  This point $p'$ must appear either in the interior of the triangle or on the edge $\overline{p_{i+1}q}$, meaning it must be an interior lattice point of $P$.   As there are no possible edges to separate them,  $p'$ is connected to $p_{i+1}$ by a segment in $\Delta$, which is impossible as $G$ is a chain, a contradiction. An identical argument holds for $q_j$ in the fourth quadrant, replacing $p_{i}$ with $p_{i+2}$.  Finally, if $q_j$ is in the third quadrant, we can reach a similar contradiction considering the triangle $\text{conv}(p_i,p_{i+2},q_j)$, which will have area strictly greater than $\frac{3}{2}$ and thus will contain an extra lattice point besides its vertices and the interior point $p_{i+1}$.  Thus, for all $j$, at least one of $a_j$ and $b_j$ is equal to $0$.

In the tropical embedding of the graph $G$, the edge $e_2$ is made up of line segments that are dual to $q_3,q_4,\ldots, q_{n-1}$.  Consider the line segments in $e_2$ dual to $q_i$'s of the form $(a_i,0)$.   The sum of the horizontal widths of these segments must be at least the sum of the horizontal widths of $e_1$ and $e_h$: otherwise the cycle $c_{i+1}$ would not be closed.  Since these line segments in $e_2$ have slopes in $\mathbb{Z}$, each of them has lattice length equal to horizontal width.  The same holds for $e_1$ and $e_h$, implying $\ell_2\geq \ell_1+\ell_h>\ell_1$.  This means $G$ has edges of a two-cut with different lengths, namely $e_1$ and $e_2$ with lengths $\ell_1\neq \ell_2$.  By \cite[Lemma 4.2]{BLMPR}, the graph $G$ cannot be hyperelliptic.
\end{proof}

It is worth remarking that it is not immediately obvious from Figure~\ref{figure:nonhyp} that  $e_2$ is longer than $e_1$, since we are considering lattice length rather than Euclidean length.  For instance, if $\ell_1=\ell_v=1$, $\ell_h=2$, and $e_2$ consists of a single line segment with slope $2/3$, then $\ell_1=\ell_2$.  This is ruled out by constraints on the lattice polygon $P$, but the result does require more work than it might initially seem.

The results of this section now allow us to prove that hyperelliptic graphs that arise as the skeletons of smooth tropical plane curves only come from hyperelliptic Newton polygons.

\begin{proof}[Proof of Theorem \ref{thm:onlyhyperelliptic}]  Let $C$ be a smooth tropical plane curve with Newton polygon $P$ and a hyperelliptic skeleton $G$.  By Proposition \ref{proposition:mustbechain}, the graph $G$ must be a chain.   If $P$ were not a hyperelliptic polygon, then by  Proposition \ref{proposition:chainmeanshyperelliptic} the chain $G$ could not be hyperelliptic as assumed.  We conclude that $P$ must be a hyperelliptic polygon.
\end{proof}

\medskip

\noindent\textbf{Acknowledgements.}  The author thanks several anonymous referees for very helpful comments on an earlier version of this paper.  
The author thanks Sarah Brodsky, Melody Chan, Michael Joswig, and Bernd Sturmfels for many helpful and illuminating conversations on tropical curves.  The author also thanks Desmond Coles, Neelav Dutta, Sifan Jiang, and Andrew Scharf for their help in developing the theory of crowded graphs.

\end{document}